\documentclass[a4, 12pt]{amsart}
\usepackage[mathscr]{eucal}
\usepackage{amssymb}
\usepackage{latexsym}
\usepackage{amsthm}
\usepackage{color}
\theoremstyle{plain}
\newtheorem{theorem}{Theorem}[section]

\newtheorem{remark}{Remark}[section]
\newtheorem{lemma}{Lemma}[section]
\newtheorem{proposition}{Proposition}[section]

\makeatletter
\@addtoreset{equation}{section}

\title[Estimates on scalar curvature of self-shrinkers]
{Estimates on scalar curvature of self-shrinkers*}
\author [Q. -M. Cheng, F. Li  and G. Wei]{Qing-Ming Cheng, Fengjiang Li{$^+$}  and Guoxin Wei{$^+$}}
\address{Qing-Ming Cheng \newline
\indent
Mathematical Science Research Center, \newline 
\indent Chongqing University of Technology, Chongqing 400054, P. R. China. 
\vskip 1mm
\indent
Osaka Center Advanced Mathematical Institute \newline
\indent
Osaka Metropolitan University, Osaka 558-8585, Japan \newline
\indent qingmingcheng@yahoo.com, chengqingming@cqut.edu.cn }
\address{Fengjiang Li \newline
\indent
Mathematical Science Research Center, \newline 
\indent Chongqing University of Technology, Chongqing 400054, P. R. China. }
\address{Guoxin Wei \newline
\indent   School of Mathematical Sciences, South China Normal University,
\newline
\indent 510631, Guangzhou,  China, weiguoxin@tsinghua.org.cn}

\thanks{2020 Mathematics Subject Classification. Primary 53C40; Secondary 53C42}
\thanks{$+$ Fengjiang Li  and Guoxin Wei are corresponding authors}
\thanks{*This work was supported by  National Natural Science Foundation of China (Grant Nos.
12301062, 12571050, 12671065), Natural Science Foundation of Chongqing
(No.CSTB2024NSCQ-MSX0537), Japan Society for the Promotion of Science Grant-in-Aid
for Scientific Research (C) (Grant No. 25K06992).}

\begin{document}
\maketitle

\begin{abstract}
In this paper, we  study $n$-dimensional complete self-shrinkers in  $\mathbb R^{n+1}$
with constant squared norm $S$ of the second fundamental form. We partially resolve 
the conjecture on $n$-dimensional 
complete self-shrinkers in  $\mathbb R^{n+1}$ with constant squared norm $S$ of the second fundamental form.
Furthermore,  if the scalar curvature of an  $n$-dimensional  
self-shrinker is  constant, then we prove that the scalar curvature $R$ 
satisfies $R\leq n-1$.  We also classify $n$-dimensional complete self-shrinkers in  $\mathbb R^{n+1}$ 
with non-negative constant 
scalar curvature. 
\end{abstract}
\maketitle

\section{introduction}
\vskip2mm
\noindent
\noindent
This paper is concerned with study on the possible singularities of  mean curvature  flow, which is
one of  the most important problems in
the research  on the mean curvature flow. By making use of Huisken's monotonicity formula,  we  know that 
a solution to the flow is asymptotically self-similar near a given type I singularity. Thus, it
is modeled  by  self-shrinking solutions of the flow.
An  $n$-dimensional  hypersurface   $X: M\rightarrow \mathbb{R}^{n+1}$  in the $(n+1)$-dimensional
Euclidean space $\mathbb{R}^{n+1}$  is called {\it a self-shrinker} if it satisfies
\begin{equation*}
 H+ \langle X, N\rangle=0,
\end{equation*}
where  $N$  and $ H$ denote the unit normal vector  and mean curvature of  this
hypersurface.
Since self-shrinkers  describe all possible  blow-ups at a given singularity, self-shrinkers play an important role 
in the study on singularities  of the mean curvature flow.  \newline
 Throughout  this paper,  self-shrinkers, which we  consider,  are immersed self-shrinkers. We do not assume embedding condition
 except we need to emphasize  it.
  \newline
It is well-known that Abresch and Langer \cite{al}  classified closed self-shrinking curves in $\mathbb{R}^2$ 
and showed that the  round circle is the only embedded self-shrinker.
Huisken \cite{h2} proved  an  $n$-dimensional  compact  self-shrinker 
in $\mathbb{R}^{n+1}$ with  mean curvature $H\geq 0$ is isometric to the sphere $S^n(\sqrt{n})$. 
Furthermore,  Drugan \cite{dr} constructed  an immersed, non-embedded self-shrinker of {genus $0$}.
Hence, we know that  self-shrinkers
do not share common features with  the Hopf theorem: a topological sphere with constant mean curvature in $\mathbb R^3$ is the 
round sphere. But according to the  theorem of Brendle \cite{b}: if $X:M^2 \to \mathbb{R}^{3}$ is  
a 2-dimensional compact embedded  self-shrinker in $\mathbb R^3$
with genus $0$, then $X:M^2 \to \mathbb{R}^{3}$ is the round sphere.
Hence, self-shrinkers
share common features with {Alexandrov  theorem}  on {the embedded  sphere} 
with constant mean curvature in $\mathbb R^3$.
On the other hand, Huisken \cite{h3},  Colding and Minicozzi \cite{cm}  gave a complete classification for  $n$-dimensional  complete  
embedded self-shrinkers
in $\mathbb{R}^{n+1}$ with  mean curvature $H\geq 0$ and  with polynomial volume growth.
Furthermore,  Ding and Xin \cite{dx1} and  Cheng and Zhou \cite{cz} have proven that 
 an $n$-dimensional  complete  self-shrinker has polynomial volume growth if and only if  it is proper.
 On the other hand, it is also known that there exist complete self-shrinkers without polynomial volume growth 
 in $\mathbb R^{n+1}$ in Halldorsson \cite{h}. \\
 Since  many formulas on self-shrinkers are very similar to formulas 
on minimal hypersurfaces in the unit sphere in the some sense, one hopes
that self-shrinkers share some common properties of  minimal 
hypersurfaces in the unit sphere.
It is well-known, that for  minimal hypersurfaces in the unit sphere, the following Chern problems are 
very important:
\vskip2mm
\noindent
{\bf Chern problems}.
For $n$-dimensional compact minimal  hypersurfaces in $S^{n+1}(1)$
with constant  squared norm $S$ of the second fundamental  form, is the following true?
\begin{enumerate}
\item $S\leq c(n)$, where $c(n)$ is a constant depending only on dimension $n$,
\item the values of $S$ of the squared norm
of the second fundamental form  are discrete, 
\item the values of $S$
should determine the hypersurfaces up to a rigid motion in the ambient sphere
$S^{n+1}(1)$.
\end{enumerate}
\vskip2mm
\noindent
For self-shrinkers, the 
following conjecture is well-known:
\vskip2mm
\noindent
{\bf Conjecture}.  An $n$-dimensional complete self-shrinker  $X: M\rightarrow \mathbb{R}^{n+1}$
 with constant  squared norm of the second fundamental form
is isometric to one of  
\begin{enumerate}
\item  $S^{n}(\sqrt{n})$,
\item $\mathbb{R}^{n}$, 
\item
$S^k (\sqrt k)\times \mathbb{R}^{n-k}$, $1\leq k\leq n-1$.
\end{enumerate}
\vskip2mm
 \noindent
Cheng and Ogata \cite{co}  confirmed this conjecture for $n=2$. Namely, they have  proven
the following:

\begin{theorem} A $2$-dimensional complete self-shrinker  $X: M\rightarrow \mathbb{R}^{3}$
 with constant  squared norm of the second fundamental form
is isometric to one of  
\begin{enumerate}
\item  $S^{2}(\sqrt{2})$,
\item $\mathbb{R}^{2}$, 
\item
$S^1 (1)\times \mathbb{R}$.
\end{enumerate}
\end{theorem}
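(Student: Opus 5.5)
The plan is to run the standard Cheng--Ogata strategy: combine the Simons-type identities for self-shrinkers with the generalized maximum principle of Omori--Yau type for the drift Laplacian $\mathcal{L}=\Delta-\langle X,\nabla\cdot\rangle$. Completeness of $M$ with bounded $S$ gives a lower Bakry--\'Emery Ricci bound, so the principle applies. Throughout, $S=\sum h_{ij}^2$ is constant.

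\textbf{Step 1: the basic identities.} For a self-shrinker,
\[
\mathcal{L}H=H(1-S),\qquad \tfrac12\mathcal{L}S=|\nabla h|^2+S(1-S).
\]
Since $S$ is constant, $|\nabla h|^2=S(S-1)$, so $S=0$ or $S\ge 1$.
\begin{itemize}
\item If $S=0$, the surface is totally geodesic. Then $H=0$, so $\langle X,N\rangle=0$ and the plane passes through the origin; this gives $\mathbb{R}^2$.
\item If $S=1$, then $\nabla h=0$, so the second fundamental form is parallel. Lawson's classification of surfaces with parallel second fundamental form, together with the self-shrinker equation, gives $S^2(\sqrt2)$ (where $S=1$) or $S^1(1)\times\mathbb{R}$.
\end{itemize}
It therefore remains to rule out $S>1$ constant.

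\textbf{Step 2: the case $S>1$.} Here I use the next-order identity, namely the formula for $\tfrac12\mathcal{L}|\nabla h|^2$:
\[
\tfrac12\mathcal{L}|\nabla h|^2=|\nabla^2h|^2+(2-S)|\nabla h|^2+(\text{cubic terms } 3(\textstyle\sum h_{ijk}h_{ijl}h_{kp}h_{pl}\ldots)).
\]
Because $|\nabla h|^2=S(S-1)$ is constant, the left side vanishes. This yields a pointwise identity relating $|\nabla^2h|^2$ to algebraic expressions in the principal curvatures $\lambda_1,\lambda_2$ and the components $h_{111},h_{112},h_{122},h_{222}$.

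In dimension two I would work in a frame with $h_{12}=0$. I would use:
\begin{itemize}
\item $\lambda_1^2+\lambda_2^2=S$;
\item $\sum h_{ijk}^2=S(S-1)$;
\item the Codazzi symmetry;
\item the differentiated constraint $\sum\lambda_i h_{iik}=0$, which comes from $\nabla S=0$.
\end{itemize}
The constraint gives $\lambda_1h_{11k}+\lambda_2h_{22k}=0$, so the third-order data are determined by two functions times $H$-related quantities. I would also use $H_{,k}=\sum_i\lambda_i\langle X,e_i\rangle$-type relations, i.e.\ $\nabla H=-h(X^T,\cdot)$.

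\textbf{Step 3: the analytic core.} I would apply the generalized maximum principle to $H$ (bounded, since $H^2\le 2S$). This produces a sequence $p_m$ along which
\[
H\to\sup H,\qquad |\nabla H|\to0,\qquad \limsup\mathcal{L}H\le0.
\]
By boundedness, the derivatives $h_{ijk}$ and $h_{ijkl}$ converge along this sequence after passing to a subsequence. Then:
\begin{itemize}
\item $\mathcal{L}H=H(1-S)$ with $S>1$ forces $\sup H\ge0$.
\item Symmetrically, applying the principle at $\inf H$ forces $\inf H\le0$.
\end{itemize}
At the limits, $\nabla H=0$ and $\lambda_1h_{11k}+\lambda_2h_{22k}=0$ make the third-order data degenerate. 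Plugging into the second-order identity from Step 2 and $|\nabla h|^2=S(S-1)$ should give polynomial equations in $\bar\lambda_1,\bar\lambda_2$ and $S$. I would aim to show these equations force $S(S-1)=0$, a contradiction.

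\textbf{Main obstacle.} The hard part is Step 3's algebra. The limiting equations must be shown to be inconsistent for every $S>1$; the risk is that a configuration with $\bar\lambda_1=-\bar\lambda_2$ (that is, $\bar H=0$) is not immediately contradictory. For that case I would first try to treat $\sup H$ and $\inf H$ together. I would also exploit the fourth-order identity for $\sum h_{ijkl}^2$ and the Ricci-commutation formulas $h_{ijkl}-h_{ijlk}$, evaluated at the extremal points of $H$, to show that $\bar H=0$ at both extrema implies $H\equiv0$. A minimal self-shrinker is a cone, and for a cone constant $S$ forces $S=0$; that contradicts $S>1$.
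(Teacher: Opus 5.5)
The paper gives no proof of this statement; it quotes it from Cheng--Ogata \cite{co}. So I judge your plan on its own. Step 1 and the first half of Step 3 are fine. However, your case analysis at the limit of the maximizing sequence for $H$ is misdiagnosed, and the case that genuinely remains has no argument.

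In a principal frame the limits satisfy $\bar h_{11k}+\bar h_{22k}=0$ and $\bar\lambda_1\bar h_{11k}+\bar\lambda_2\bar h_{22k}=0$. This linear system has determinant $\bar\lambda_2-\bar\lambda_1$. So whenever $\bar\lambda_1\neq\bar\lambda_2$, all $\bar h_{ijk}$ vanish, contradicting $\sum h_{ijk}^2=S(S-1)>0$, and no fourth-order information is needed.

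In particular $\bar\lambda_1=-\bar\lambda_2$ (that is, $\bar H=0$) is immediately contradictory for $S>0$. The obstacle you name is not one, and the detour through ``$H\equiv0$, hence a cone'' is beside the point.

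The configuration that survives first order is the umbilic one: $\bar\lambda_1=\bar\lambda_2=\sqrt{S/2}$, i.e.\ $\sup H=\sqrt{2S}$, the largest value allowed by $H^2\le 2S$. There the two constraints coincide. The data $\bar h_{122}=-\bar h_{111}$, $\bar h_{112}=-\bar h_{222}$, $4(\bar h_{111}^2+\bar h_{222}^2)=S(S-1)$ are perfectly consistent. So is $\limsup\mathcal LH\le0$, since $\sqrt{2S}(1-S)<0$. For this case your proposal offers only the hope that ``the equations force $S(S-1)=0$''.

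The missing idea is to use the fourth-order identity from your Step 2 exactly here. With $|\nabla h|^2$ constant it reads $\sum h_{ijkl}^2=S(S-1)(S-2)+3(A-2B)$, which is (3.2) in the paper, with $A=\sum\lambda_i^2h_{ijk}^2$ and $B=\sum\lambda_i\lambda_jh_{ijk}^2$. Along any sequence with $\lambda_1-\lambda_2\to0$ one has $\lambda_i^2,\lambda_i\lambda_j\to S/2$. Hence $A,B\to\frac S2\,S(S-1)$ and
\[
0\le\sum h_{ijkl}^2\longrightarrow -S(S-1)\Bigl(\frac S2+2\Bigr)<0 ,
\]
a contradiction. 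Thus $(\lambda_1-\lambda_2)^2\ge\epsilon>0$ on all of $M$.

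Solving the two constraints pointwise gives $h_{11k}=-\lambda_2\nabla_kH/(\lambda_1-\lambda_2)$ and $h_{22k}=\lambda_1\nabla_kH/(\lambda_1-\lambda_2)$. Substituting,
\[
(\lambda_1-\lambda_2)^2S(S-1)=(\nabla_1H)^2(3\lambda_1^2+\lambda_2^2)+(\nabla_2H)^2(\lambda_1^2+3\lambda_2^2)\le 3S|\nabla H|^2 ,
\]
so $|\nabla H|^2\ge\frac{S-1}{3}\epsilon$ everywhere. This contradicts even the gradient part of the generalized maximum principle applied to the bounded function $H$. With this step inserted your argument closes, and the discussion of $\sup H$ versus $\inf H$ is no longer needed.
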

 
\begin{remark}  For $n\geq 3$, the above conjecture is still open. 
As a partial result, for $n=3$, Cheng, Li and Wei \cite{cliw, cliw1} have resolved the conjecture under the condition
that $f_3=\sum_i\lambda_i^3$  or $f_4=\sum_i\lambda_i^4 $ is constant, where $\lambda_i$ denotes the principal curvature.
For general $n$, Cheng and Wei \cite{cw} and Cheng, Wei and Yano \cite{cwy} have also obtained partial results.
\end{remark}
\noindent
We  will resolve  the above conjecture,  affirmatively,  if the scalar curvature $R$ satisfies $R\geq -\dfrac{7}{5}$. 
\begin{theorem}
An $n$-dimensional complete self-shrinker  $X: M\rightarrow \mathbb{R}^{n+1}$
 with constant  squared norm of the second fundamental form
is isometric to one of  
\begin{enumerate}
\item  $S^{n}(\sqrt{n})$,
\item $\mathbb{R}^{n}$, 
\item
$S^k (\sqrt k)\times \mathbb{R}^{n-k}$, $1\leq k\leq n-1$.
\end{enumerate}
if the scalar curvature $R$ satisfies $R\geq -\dfrac{7}{5}$.
\end{theorem}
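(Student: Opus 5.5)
Since $R = H^2 - S$ by the Gauss equation and $S$ is constant, the hypothesis $R\geq -\frac{7}{5}$ becomes a lower bound on $H^2$, namely $H^2 \geq S - \frac{7}{5}$. The plan is to combine this with the standard identities for self-shrinkers and with a generalized maximum principle (Omori--Yau for the $\mathcal L$-operator, which applies since constant $S$ gives bounded curvature and Ricci bounded below). Here $\mathcal L = \Delta - \langle X,\nabla\,\cdot\,\rangle$.

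The basic formulas are
\[
\mathcal L H = H(1-S), \qquad \tfrac12\mathcal L S = \sum_{i,j,k}h_{ijk}^2 + S(1-S).
\]
With $S$ constant, the second gives $\sum h_{ijk}^2 = S(S-1)$, so either $S=0$, or $S=1$ with $\nabla h\equiv0$, or $S>1$. If $\nabla h\equiv 0$, Lawson's classification of isoparametric hypersurfaces together with the self-shrinker equation gives the listed examples ($S=0$ gives $\mathbb R^n$, $S=1$ gives $S^k(\sqrt k)\times\mathbb R^{n-k}$ or $S^n(\sqrt n)$). It therefore suffices to rule out $S>1$, and then $S<1$.

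For $S<1$ I would use the known gap theorem (Cao--Li / Cheng--Wei type): $S\le1$ forces $S=0$ or $S=1$. Concretely, apply Omori--Yau to $H^2$: at an approximate maximum, $\mathcal L H^2 = 2|\nabla H|^2 + 2H^2(1-S)$ is small, so $\sup H^2=0$ when $S<1$. Then $H\equiv0$, hence $R=-S$, and $\sum h_{ijk}^2=S(S-1)\ge 0$ forces $S=0$.

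The main case is $S>1$. I would use Omori--Yau on $H$ (bounded, since $H^2\le nS$) at points where $H\to\sup H$ or $\inf H$. Since $\mathcal L H=H(1-S)$ and $1-S<0$, this gives $\inf H^2$ information. The lower bound $H^2\ge S-\frac75$ keeps $H$ away from zero when $S>\frac75$, so $H$ has a sign. Then I would differentiate the self-shrinker identities: $H_{,i} = \sum_k h_{ik}\langle X,e_k\rangle$ and $h_{ijk}$ relations, together with the third-order formula
\[
\tfrac12\mathcal L\sum h_{ijk}^2 = \sum h_{ijkl}^2 + (2-S)\sum h_{ijk}^2 + 3(B-2C),
\]
where $B, C$ are the usual cubic expressions in $h_{ij}$ and $h_{ijk}$. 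At a sequence of points realizing $\sup H$ or $\inf H$ (where $\nabla H\to0$), I would estimate $f_3=\sum\lambda_i^3$ and $\sum h_{ijk}^2=S(S-1)$ against $H^2\ge S-\frac75$ via the Okumura-type inequality, aiming to prove $S\le \frac{7}{5}$-type bounds or $S\le1$ as a contradiction. The window $1<S\le\frac75$ (where $H$ may vanish) would be treated separately, using the limiting principal curvatures and the condition $\sum h_{ijk}^2=S(S-1)$ at points where $H\to0$.

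The main obstacle is this $S>1$ analysis: extracting enough algebraic constraints on the limiting principal curvatures and third derivatives at Omori--Yau sequences so that $R\ge-\frac75$ is precisely what closes the inequality. I expect the constant $\frac75$ to arise from optimizing a Lagrange-type inequality among $\lambda_i$ with $\sum\lambda_i=H$ and $\sum\lambda_i^2=S$.
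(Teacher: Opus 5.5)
There is a genuine gap. Your reduction to the trichotomy ($S=0$, $S=1$ with $\nabla h\equiv0$, or $S>1$) is fine. The separate ``$S<1$'' step is redundant, since $\sum h_{ijk}^2=S(S-1)\ge 0$ already excludes $0<S<1$.

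\textbf{The missing case $1<S\le\frac75$.} You defer this window (``treated separately'') without giving any mechanism, and it is where all the real work lies. There the hypothesis only gives $H^2\ge S-\frac75$, which is vacuous, so the bound on $R$ plays no role. What is needed is a stand-alone gap theorem: a complete self-shrinker with constant $S\le\frac75$, with no volume-growth assumption, must have $S\le 1$. This is the paper's Theorem 3.1.

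You name no function to which the maximum principle would be applied. The natural candidate, $H$, only yields $\inf H\le 0\le \sup H$, which is consistent in this window because $H$ may vanish. The paper uses fourth-order information:
\begin{itemize}
\item It applies the generalized maximum principle to $F=\frac{1}{4}Sf_4-\frac{1}{6}cf_3^2$, obtaining inequality (3.6) in the limit.
\item It combines (3.6) with a sharp lower bound for $\sum h_{ijkl}^2=S(S-1)(S-2)+3(A-2B)$. This bound comes from symmetrizing $h_{ijkl}$ into $u_{ijkl}$ and testing against $h_{ij}b_{kl}+h_{kl}b_{ij}$, $\beta_ih_{jkl}+\cdots$ and $h_{ij}h_{kl}$ (Proposition 3.1).
\item It uses the Cheng--Wei pointwise estimates of Lemma 3.1.
\item Taking $c=2$ gives $S>\frac65$; taking $c=1+c_1(t)$, with careful numerics, pushes this to $S>\frac75$.
\end{itemize}
So $\frac75$ does not come from optimizing over $\lambda_i$ with prescribed $H$ and $S$. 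It is the threshold of this gap estimate, and $-\frac75$ in the hypothesis is chosen only so that $S>\frac75$ forces $H^2>0$.

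\textbf{The case $S>\frac75$.} This case needs none of the third-order analysis you propose; your own two observations already close it. Choose the sign so that $H\ge\sqrt{S-7/5}>0$. Omori--Yau applied to $-H$ gives a sequence with $H\to\inf H$ and $\liminf \mathcal{L}H\ge 0$, that is, $(\inf H)(1-S)\ge 0$. Since $S>1$, this gives $\inf H\le 0$, a contradiction.

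The paper reaches the same conclusion via the maximum principle for $S/H^2$ and (2.9). Along the sequence, $\nabla H\to 0$ forces $h_{ijk}\to 0$, contradicting $\sum h_{ijk}^2=S(S-1)>0$.

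Your proposal therefore settles the easy half of the dichotomy, once you notice it is already finished, and leaves the hard half, $1<S\le\frac75$, unproved.
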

\begin{remark} Since we do not assume the condition of polynomial volume growth, we can not make use of 
the integral formulas and Stokes theorem as in Colding and Minicozzi \cite{cm} and Cheng and Wei \cite{cw}.
In order to overcome  these substantial difficulties, we need to use the generalized maximum principle due to Cheng and Peng
\cite{cp}.  Thus, we  must  give precise estimates for several inequalities which include the second fundamental form 
and its covariant  derivatives  in order to prove our  theorem (see theorem 3.1 in section 3). 
\end{remark}
\noindent
For minimal hypersurfaces in the unit sphere, we know that the scalar curvature is constant
if and only if the squared norm  of the second fundamental form is constant, thanks to
the Gauss equation. But self-shrinkers do not share this property. 
In \cite{g}, Guo proved that compact self-shrinkers with constant scalar curvature in $\mathbb R^{n+1}$
are isometric to the sphere $S^{n}(\sqrt n)$. Since an $n$-dimensional compact self-shrinker  
in $\mathbb R^{n+1}$ must have a convex point, the constant scalar curvature must be positive at this point.
By making use of Stokes formula, Guo \cite{g} proved  the scalar curvature $R= (n-1)$.  Thus, 
Chern type problems on compact self-shrinkers with constant scalar curvature were resolved by Guo. On the other hand, 
the study on $n$-dimensional complete non-compact self-shrinkers in $\mathbb R^{n+1}$ is  
more important and  in this case, one can not use  integral formulas and Stokes theorem. 
This will be a substantial problem, which we should  overcome.
Luo, Sun and Yin \cite{lsy} have proven that an 
$n$-dimensional complete  self-shrinker in $\mathbb R^{n+1}$  with polynomial volume growth and
positive constant scalar curvature  is isometric to one of
\begin{enumerate}
\item $S^{k}(\sqrt k)\times \mathbb{R}^{n-k}, \ 2\leq k\leq n-1$,
\item $S^{n}(\sqrt{n})$.
\end{enumerate}
In fact, according to the Gauss equation, if  the scalar curvature $R$ is positive, we have 
$H\neq 0$ because of $H^2-S=R$. Hence, according to the results and proof 
due to Colding and Minicozzi \cite{cm},
we can remove the condition that scalar curvature is constant, thanks to the Gauss equation.
\begin{proposition} 
An $n$-dimensional complete  self-shrinker in $\mathbb R^{n+1}$  with polynomial volume growth and
positive  scalar curvature  is isometric to one of
\begin{enumerate}
\item $S^{n}(\sqrt{n})$,
\item $S^{k}(\sqrt k)\times \mathbb{R}^{n-k}, \ 2\leq k\leq n-1$.
\end{enumerate}
\end{proposition}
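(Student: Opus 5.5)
The plan is to reduce the proposition to the Colding--Minicozzi classification of complete embedded self-shrinkers with $H\geq 0$ and polynomial volume growth. We need two inputs: $H$ has a sign, and the embeddedness assumption can be dropped in the part of their argument we use.

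First, by the Gauss equation for a hypersurface in $\mathbb R^{n+1}$, $R=H^2-S$. So $R>0$ gives $H^2>S\geq 0$, and $H$ vanishes nowhere on $M$. Since $M$ is connected, $H$ has a fixed sign. Replacing $N$ by $-N$ if needed (the self-shrinker equation $H+\langle X,N\rangle=0$ is invariant under this change), we may assume $H>0$ everywhere.

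Second, we recall the Colding--Minicozzi argument \cite{cm}. They use the operator $L=\Delta-\langle X,\nabla\,\cdot\,\rangle+\frac12+S$, with the normalization $H=\frac12\langle X,N\rangle$; in the normalization of this paper the constant $\frac12$ becomes $1$. One has $LH=H$ and $L|A|=|A|+\frac{|\nabla A|^2-|\nabla|A||^2}{|A|}$. Since $H>0$, the function $\log H$ is well defined. Polynomial volume growth makes every integral against the Gaussian weight $e^{-|X|^2/2}$ finite. Their cutoff-function integration by parts then gives
\[
\int_M |A|^2\,|\nabla\log(|A|/H)|^2\,e^{-|X|^2/2}
+\int_M \bigl(|\nabla A|^2-|\nabla|A||^2\bigr)e^{-|X|^2/2}\leq 0 .
\]
Hence $|A|=cH$ and $|\nabla A|=|\nabla|A||$. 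This step uses only the Gaussian integrability coming from polynomial volume growth, together with the bounds $\int |A|^4e^{-|X|^2/2}$ and $\int|\nabla A|^2e^{-|X|^2/2}<\infty$ that they also derive from $H>0$. None of this uses embeddedness.

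Third, from $|\nabla A|=|\nabla|A||$ one gets the standard algebraic conclusion: either $\nabla A\equiv0$, or $A$ has rank one. If $\nabla A\equiv 0$, Lawson's classification of hypersurfaces with parallel second fundamental form gives $S^k(r)\times\mathbb R^{n-k}$. The self-shrinker equation forces $r=\sqrt k$, and $R>0$ excludes the hyperplane. If $A$ has rank one, the hypersurface is $\Gamma\times\mathbb R^{n-1}$ with $\Gamma$ a self-shrinking curve; then $R=H^2-S=0$, which contradicts $R>0$. For $S^k(\sqrt k)\times\mathbb R^{n-k}$ we have $R=H^2-S=k-1$, which rules out $k=1$ and leaves $2\leq k\leq n$, i.e. cases (1) and (2).

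The main obstacle is the second step: checking that the integral estimates of \cite{cm} (their Theorem 0.17 / Section 9 via Lemma 9.15 and Proposition 9.2) go through for immersed, possibly non-embedded, hypersurfaces with polynomial volume growth. I would go through their proof line by line, confirm that embeddedness enters only through the final identification of the curve $\Gamma$ in the rank-one case (Abresch--Langer), and note that our curvature condition disposes of that case before embeddedness is ever needed.
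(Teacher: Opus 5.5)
Your proposal is correct and follows the paper's argument. You use $R=H^2-S>0$ to get $H>0$ after choosing the normal, and then apply the Colding--Minicozzi classification, since embeddedness there enters only in the rank-one case, which is excluded because rank one gives $R=H^2-S=0$. One cosmetic slip: in this paper's normalization the operator with $LH=H$ is $\Delta-\langle X,\nabla\,\cdot\,\rangle+S$ (from $\mathcal{L}H=H(1-S)$), not the one you wrote, but the argument only uses that $H$ and $|A|$ have the same zero-order term, so it is unaffected.
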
 
\begin{proof}
Since the scalar curvature $R$ is positive, we can assume $H>0$ according to the Gauss equation
$H^2-S=R$. Therefore, by applying  the results in the theorem 0.17 (theorem 10.1) of Colding 
and Minicozzi \cite{cm}, we finish our proof. We should notice that in the theorem 0.17 of Colding and 
Minicozzi, they assume the embedding condition.  But this embedding condition is  only used 
in the  case 2 of the step 3 in their proof, that is, at some point, the rank of the second fundamental form
is one. In this case, we know the scalar curvature $R=0$. Since the scalar curvature is positive, 
this case does not occur. Thus, we can make use of the results of Colding and Minicozzi  \cite{cm}.
\end{proof}
\noindent
Furthermore,  we will prove the following:
\begin{theorem}
An $n$-dimensional  self-shrinker in $\mathbb R^{n+1}$  with 
positive constant scalar curvature $R$ satisfies $0<R\leq n-1$ and $S\leq 1$.
\end{theorem}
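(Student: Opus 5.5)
The plan is to obtain both bounds pointwise, from the Simons-type identities for self-shrinkers together with the Gauss equation, so that no integral formula, Stokes theorem or maximum principle is needed. The main task is to show $S\le 1$; the bound $R\le n-1$ then follows from it algebraically.

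\emph{Step 1: the basic identities.} Let $\mathcal L=\Delta-\langle X,\nabla\cdot\rangle$ be the drift Laplacian on the self-shrinker $H+\langle X,N\rangle=0$. I will use the standard formulas
\begin{equation*}
\tfrac12\mathcal L S=|\nabla A|^2+S(1-S),\qquad \tfrac12\mathcal L H^2=|\nabla H|^2+H^2(1-S),
\end{equation*}
which follow from $\mathcal L H=H(1-S)$ and the Simons identity for self-shrinkers. By the Gauss equation $R=H^2-S$, and $R$ is constant by hypothesis. Hence $\mathcal L H^2=\mathcal L S$. Subtracting the two formulas gives the pointwise identity
\begin{equation*}
|\nabla A|^2-|\nabla H|^2=(H^2-S)(1-S)=R(1-S).
\end{equation*}

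\emph{Step 2: comparing $|\nabla A|^2$ and $|\nabla H|^2$.} This is the key step. Differentiating $H^2-S=R$ gives $HH_{,k}=\sum_{i,j}h_{ij}h_{ijk}$ for every $k$. By the Cauchy--Schwarz inequality, summed over $k$,
\begin{equation*}
H^2|\nabla H|^2\le S\,|\nabla A|^2 .
\end{equation*}
Because $R>0$, we have $H^2=S+R>S\ge 0$. At points where $S>0$ this yields $|\nabla A|^2\ge \frac{H^2}{S}|\nabla H|^2\ge|\nabla H|^2$. At points where $S=0$ the inequality $S\le1$ holds trivially, so these points need no further argument. This Alencar--do Carmo type inequality is the only place where $R\ge 0$ enters essentially, and it is the step I expect to carry the argument.

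\emph{Step 3: conclusion.} Combining Step 2 with the identity of Step 1 gives $R(1-S)\ge 0$ everywhere. Since $R>0$, this forces $S\le 1$ at every point. For the scalar curvature, the inequality $H^2\le nS$ gives
\begin{equation*}
R=H^2-S\le (n-1)S\le n-1 .
\end{equation*}
The argument is purely pointwise, so it applies to complete non-compact self-shrinkers without any volume growth assumption, as the statement requires.

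The only routine points to check are the sign conventions in $\mathcal L H=H(1-S)$ and in the Simons formula, with $N$ chosen so that $H+\langle X,N\rangle=0$.
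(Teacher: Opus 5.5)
Your proposal is correct and follows essentially the same route as the paper: subtract the two Simons-type formulas to get $0=|\nabla H|^2-\sum_{i,j,k}h_{ijk}^2+(1-S)R$, then use $H\nabla_iH=\sum_j\lambda_jh_{jji}$ with Cauchy--Schwarz and $S/H^2<1$ to conclude $S\le 1$, and finally bound $R$ via $\frac{n}{n-1}R\le H^2\le nS\le n$ (equivalent to your $R\le (n-1)S$). The only cosmetic difference is that the paper works in a principal frame and bounds $|\nabla H|^2$ by $\sum_{i,j}h_{iij}^2$ instead of the full $|\nabla A|^2$; this does not affect the conclusion.
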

\begin{remark} Since the sphere $S^n(\sqrt n)$ satisfies $R=n-1$ and $S=1$,  our estimates
are  optimal.
\end{remark}
\noindent
By making use of the generalized maximum principle due to Cheng and Peng \cite{cp}, we obtain the following:
\begin{theorem} 
An $n$-dimensional complete  self-shrinker in $\mathbb R^{n+1}$  with 
non-negative constant scalar curvature  either is isometric to  one of
\begin{enumerate}
\item $S^{n}(\sqrt{n})$,
\item $\mathbb {R}^{n}$,
\item $S^{k}(\sqrt k)\times \mathbb{R}^{n-k}, \ 1\leq k\leq n-1$,
\item $\Gamma \times \mathbb{R}^{n-1}$, where $\Gamma$ is a  complete self-shrinker curve in $\mathbf R^2$,
\end{enumerate}
or satisfies $0<R<n-2$, $\frac{R}{n-1}\leq S<1$, $\sup S=1$, $\frac{n}{n-1}R\leq H^2$,
$|X|^2\geq \frac{n}{n-1}R$ and $\sup |X|^2=\infty$.
\end{theorem}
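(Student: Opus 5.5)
We would work with the standard self-shrinker identities: $\mathcal L = \Delta - \langle X,\nabla\cdot\rangle$, $\mathcal L H = H(1-S)$, $\frac12\mathcal L S = |\nabla h|^2 + S(1-S)$, and $\frac12\mathcal L |X|^2 = n-|X|^2$. We would combine these with the Gauss equation $H^2 - S = R$ and the generalized maximum principle of Cheng and Peng \cite{cp}. Since $R$ is constant, $\mathcal L H^2 = \mathcal L S$. Expanding $\mathcal L H^2 = 2|\nabla H|^2 + 2H^2(1-S)$ and comparing with $\mathcal L S$ gives
\begin{equation*}
|\nabla h|^2 - |\nabla H|^2 = (H^2-S)(1-S) = R(1-S).
\end{equation*}

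The first step is a Kato-type estimate for a hypersurface, $|\nabla h|^2 \ge \frac{3}{n+2}|\nabla H|^2$. Together with the identity above it gives $R(1-S)\ge -\frac{n-1}{n+2}|\nabla H|^2$. That alone does not yield $S\le 1$. Instead we would apply the generalized maximum principle to $S$, which is bounded above wherever the principle is needed. Indeed, $H^2 = S+R$ together with $|H| = |\langle X,N\rangle|\le|X|$ only gives growth control. However, $S\ge0$ and $R\ge0$ constant, and Cheng--Peng requires only Ricci curvature bounded below, which follows once $S$ is bounded.

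We would first show $S$ is bounded. Arguing by contradiction, we would use a compactness argument at points of large $S$, or apply Cheng--Peng to $H^2$, for which $\mathcal L H^2 \ge 2H^2(1-S) = 2H^2(1+R-H^2)$. At a sequence $p_k$ with $H^2\to\sup H^2$, $|\nabla H^2|\to 0$ and $\limsup \mathcal L H^2 \le 0$, we get $\sup H^2 (1+R-\sup H^2) \le 0$. This gives $\sup H^2 \le 1+R$, i.e.\ $\sup S\le 1$, provided $R>0$ or $\sup H^2>0$. A preliminary argument ensuring $H^2$ is bounded, so that Cheng--Peng applies, is needed; we would obtain it via the Omori--Yau principle for $\mathcal L$ applied to $\log(1+H^2)$ with a cutoff in $|X|^2$. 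We expect this boundedness step to be the main obstacle.

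Next we would obtain the estimate $\frac{n}{n-1}R\le H^2$. Writing the principal curvatures so that $H^2 - S = 2\sum_{i<j}\lambda_i\lambda_j$, Cauchy--Schwarz gives $S\ge H^2/n$, hence $R = H^2 - S \le \frac{n-1}{n}H^2$. Since $H^2=-\langle X,N\rangle H$ in absolute value gives $H^2 \le |X|^2$, we get $|X|^2 \ge \frac{n}{n-1}R$. Also $S = H^2 - R \ge \frac{R}{n-1}$.

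Then we would use the infimum side of the principle for $|X|^2$: $\frac12\mathcal L|X|^2 = n-|X|^2$, and $|\nabla|X|^2|^2 = 4|X^T|^2$. We would also apply it to the infimum of $S$, giving $0 \ge |\nabla h|^2 + S(1-S)$ in the limit. Using $|\nabla h|^2 = |\nabla H|^2 + R(1-S)$ and $|\nabla H|\to 0$ (since $\nabla H^2 = \nabla S \to 0$ and $H$ is bounded away from $0$ by $R>0$), this gives $(R+\inf S)(1-\inf S)\le 0$ unless $\inf S \ge 1$. Hence either $S\equiv 1$ or $\sup S = 1$ with $S<1$. If $S$ attains the value $1$, the strong maximum principle applied to $1-S\ge0$, which satisfies a linear inequality, forces $S\equiv1$ and $\nabla h=0$. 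Then Lawson's classification of hypersurfaces with parallel second fundamental form gives $S^k(\sqrt k)\times\mathbb R^{n-k}$ or $S^n(\sqrt n)$.

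For the case $R=0$, we have $H^2=S$ and, from the identity, $|\nabla h|^2=|\nabla H|^2$. Equality in the Kato-type analysis forces rank at most one. This is the content of Colding--Minicozzi's case 2, and it yields $\Gamma\times\mathbb R^{n-1}$ or $\mathbb R^n$.

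In the remaining case, if $\sup|X|^2<\infty$, the principle applied to $|X|^2$ at the supremum gives $\sup|X|^2\le n$. We would then rule this out via compactness and the result of Guo \cite{g}, so $\sup|X|^2=\infty$. Finally, $R<n-2$ should follow by combining $\sup S=1$, $\sup H^2 = 1+R$, and the Cheng--Peng limit of the identity, which gives $R\le n-1$ with equality only for the sphere. Sharpening this to $n-2$ is where we expect the most delicate algebraic estimate on the $\lambda_i$ at almost-maximum points.
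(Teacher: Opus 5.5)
Your proposal has genuine gaps. The most basic one is that you never actually obtain $S\le 1$, and the maximum-principle steps you put in its place run in the wrong direction.

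\textbf{The bound $S\le 1$.} The paper gets $S\le 1$ pointwise, without completeness. Since $R$ is constant, $\nabla H^2=\nabla S$, i.e.\ $H\nabla_iH=\sum_j\lambda_jh_{jji}$. Cauchy--Schwarz then gives $H^2|\nabla H|^2\le S\sum_{i,j}h_{iij}^2\le S|\nabla h|^2$. Because $0<S=H^2-R<H^2$ when $R>0$, this yields $|\nabla H|^2\le|\nabla h|^2$. Your own identity $|\nabla h|^2-|\nabla H|^2=R(1-S)$ then forces $S\le1$. This bound is what gives Ricci bounded below and makes Cheng--Peng applicable, so the ``main obstacle'' you identify disappears. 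Your proposed remedy, an Omori--Yau argument with a cutoff in $|X|^2$, would need $|X|^2$ to be an exhaustion, i.e.\ properness, which is not assumed.

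\textbf{Sign errors in the maximum principle.} Your substitute step is wrong as stated. Along $p_k$ with $H^2\to\sup H^2$, $\limsup\mathcal L H^2\le 0$ and $\mathcal L H^2\ge 2H^2(1-S)$ give $\sup H^2\,(1-\sup S)\le 0$, i.e.\ $\sup S\ge 1$, not $\le 1$. The same reversal occurs in two other places:
\begin{itemize}
\item At the infimum of $S$ you have $\liminf\mathcal L S\ge 0$, which only yields the vacuous $\inf S\le 1$. So your dichotomy ``$S\equiv 1$ or $\sup S=1$'' is not derived.
\item At the supremum of $|X|^2$ one gets $\sup|X|^2\ge n$, not $\le n$.
\end{itemize}
The correct directions are exactly what is needed. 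First, $\sup S\ge 1$ together with the pointwise $S\le 1$ gives $\sup S=1$; the paper reaches this through the maximum principle for $S/H^2$. Second, if $\sup|X|^2<\infty$, then at the maximizing points $|X^T|^2=\frac14|\nabla|X|^2|^2\to 0$. Hence $\sup|X|^2=\lim H^2(p_k)\le 1+R<n$, a contradiction. Your alternative via ``compactness and Guo'' fails, because bounded $|X|$ does not make a complete non-proper immersion compact; non-closed self-shrinking curves are complete and bounded but not compact.

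\textbf{The bound $R<n-2$.} This is left open, and it does not come from sharper algebra at almost-maximum points; the paper uses two global arguments. From $H^2-R=S\ge\frac{(\lambda_j-H)^2}{n-1}+\lambda_j^2$ one gets two-sided bounds on every $\lambda_j$ in terms of $H$ and $R$.
\begin{itemize}
\item If $R>n-2$, these bounds together with $S<1$ give $R_{jj}=H\lambda_j-\lambda_j^2\ge\frac{R-(n-2)}{n}>0$. Myers' theorem then makes $M$ compact, so $S$ would attain $\sup S=1$, contradicting $S<1$.
\item If $R=n-2$, a nonpositive $\lambda_j$ would force $H^2\ge\frac{n-1}{n-2}R=n-1$. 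This is impossible since $H^2<1+R=n-1$. So $M$ is strictly convex, and Stoker's theorem makes it the boundary of a convex body, hence proper. The Colding--Minicozzi classification for $H>0$ then contradicts $S<1$.
\end{itemize}

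\textbf{The case $R=0$.} Your argument is not right as stated. The equality $|\nabla h|^2=|\nabla H|^2$ is not the equality case of $|\nabla h|^2\ge\frac{3}{n+2}|\nabla H|^2$. Also, $H^2=S$ alone does not force rank one: take $\lambda=(1,1,-\frac12)$. What the equality does give, through the Cauchy--Schwarz chain above, is that at points with $H\neq0$ either $\nabla h=0$ or the rank is at most one. Passing from this to the global list including $\Gamma\times\mathbb R^{n-1}$ is the theorem of Luo, Sun and Yin, which the paper invokes.

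Your pointwise derivations of $S\ge\frac{R}{n-1}$ and $|X|^2\ge H^2\ge\frac{n}{n-1}R$ are fine. The latter is simpler than the paper's maximum-principle argument.
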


\begin{remark} We think that there do not exist complete self-shrinkers with constant scalar curvature 
such that they  satisfy $0<R< n-2$, $\frac{R}{n-1}\leq S<1$, $\sup S=1$, $\frac{n}{n-1}R\leq H^2<R+1$,
$|X|^2\geq \frac{n}{n-1}R$ and $\sup |X|^2=\infty$.
\end{remark}
\vskip1mm
\noindent
Furthermore, we get a pinching theorem on the mean curvature of complete self-shrinkers with non-negative scalar curvature.
\begin{theorem} 
For an $n$-dimensional complete  self-shrinker $X: M\rightarrow \mathbb{R}^{n+1}$, $n\geq 3$,  in $\mathbb R^{n+1}$  with 
non-negative constant scalar curvature, if the mean curvature $H$ satisfies
$$
H^2\leq \dfrac{n-1}{n-2}R,
$$
then   $X: M\rightarrow \mathbb{R}^{n+1}$  is isometric to  one of
\begin{enumerate}
\item $S^{n}(\sqrt{n})$,
\item $\mathbb {R}^{n}$,
\item $S^{n-1}(\sqrt {n-1})\times \mathbb{R}$.
\end{enumerate}
\end{theorem}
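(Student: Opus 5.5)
We treat $R=0$ and $R>0$ separately, using the classification theorem for non-negative constant scalar curvature stated just above. If $R=0$, the hypothesis forces $H^2\le 0$, so $H\equiv 0$. Then $S=H^2-R=0$ by the Gauss equation, so $M$ is totally geodesic. Since $H+\langle X,N\rangle=0$, it is a hyperplane through the origin, i.e. $\mathbb R^n$.

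Now assume $R>0$. By the preceding theorem, $M$ is either one of the listed standard models or lies in the exceptional class with
\[
0<R<n-2,\qquad \tfrac{R}{n-1}\le S<1,\qquad \sup S=1,\qquad \tfrac{n}{n-1}R\le H^2 .
\]
Among the standard models, $\Gamma\times\mathbb R^{n-1}$ has $R=0$ and is excluded. For $S^k(\sqrt k)\times\mathbb R^{n-k}$ we have $H^2=k$ and $R=k-1$. The pinching condition then reads $k\le \frac{n-1}{n-2}(k-1)$, which is equivalent to $k\ge n-1$. So only $S^{n-1}(\sqrt{n-1})\times\mathbb R$ (for which equality holds) and $S^n(\sqrt n)$ survive.

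The main step is ruling out the exceptional class using $H^2\le \frac{n-1}{n-2}R$. The Gauss equation gives $S=H^2-R\le \frac{R}{n-2}$. Together with $\sup S=1$ this yields $R\ge n-2$, contradicting $R<n-2$. The exceptional case is therefore empty.

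The only delicate points are that $\sup S=1$ and $R<n-2$ are genuinely available from the earlier theorem (they come from the Cheng--Peng generalized maximum principle), and that completeness is used only through that theorem. I expect no further obstacle: the rigidity is carried by the previous theorem, and this statement is an algebraic corollary of it via the Gauss equation $H^2-S=R$.
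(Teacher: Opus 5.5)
Your proposal is correct. It differs from the paper only in how the exceptional class of Theorem 1.4 is ruled out.

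\textbf{The paper's route.} It does not use $R<n-2$ at this point. Instead, it uses the eigenvalue bounds (\ref{eq:5.7}): if some $\lambda_j\le 0$ at a point $p$, then $(n-1)R\le (n-2)H^2(p)$. The pinching then forces $H^2$ to attain its supremum at $p$. That is impossible, since $H^2=S+R$ and $S<1=\sup S$. Hence $M$ is strictly locally convex, and Stoker's theorem plus Colding--Minicozzi contradict $S<1$.

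\textbf{Your route.} You observe that the pinching and the Gauss equation give $S=H^2-R\le R/(n-2)$. So $\sup S=1$ forces $R\ge n-2$, contradicting the strict bound $R<n-2$ in Theorem 1.4. This is shorter and purely algebraic at this stage.

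\textbf{What each route buys.} Yours draws on the full strength of Theorem 1.4. The strict inequality $R<n-2$ was obtained there by running the same Stoker/Colding--Minicozzi convexity argument in the borderline case $R=n-2$. So the geometric input is identical; you simply avoid repeating it. The paper's argument needs less from Theorem 1.4: only $S<1$, $\sup S=1$ and (\ref{eq:5.7}).

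Your explicit check of the product models is more careful than the paper. $S^k(\sqrt k)\times\mathbb R^{n-k}$ has $H^2=k$ and $R=k-1$, so it satisfies the pinching exactly when $k\ge n-1$. $\Gamma\times\mathbb R^{n-1}$ has $R=0$. The paper merely records $H^2$ on the two surviving models.
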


\vskip5mm
\section {Preliminaries}
\vskip2mm

\noindent
Let $X: M\rightarrow\mathbb{R}^{n+1}$ be an
$n$-dimensional connected hypersurface in the $(n+1)$-dimensional Euclidean space
$\mathbb{R}^{n+1}$. We choose a local orthonormal frame field
$\{e_A\}_{A=1}^{n+1}$ in $\mathbb{R}^{n+1}$ with dual coframe field
$\{\omega_A\}_{A=1}^{n+1}$, such that, restricted to $M$,
$e_1,\cdots, e_n$ are tangent to $M^n$.
Then we have
\begin{equation*}
dX=\sum_i\limits \omega_i e_i, \  \
de_i=\sum_j\limits \omega_{ij}e_j+\omega_{i n+1}e_{n+1}, \ 
de_{n+1}=\sum_i\omega_{n+1 i}e_i,
\end{equation*}
where $\omega_{ij}$ is the Levi-Civita connection of  $X: M\rightarrow\mathbb{R}^{n+1}$.
Because of $\omega_{n+1}=0$ along $M$, one has 
\begin{equation}\label{2.1-2}
\omega_{in+1}=\sum_j h_{ij}\omega_j,\quad
h_{ij}=h_{ji}.
\end{equation}
$$
H= \sum_i\limits h_{ii}, \  \ \vec{h}=\sum_{i,j}h_{ij}\omega_i\otimes\omega_je_{n+1}
$$
are called {\it the mean curvature} and {\it the second fundamental  form}, respectively.
Setting $S=\sum_{i,j}\limits (h_{ij})^2$, components $R_{ijkl}$ of the curvature tensor, components $R_{ij}$
of the Ricci curvature tensor  and the scalar curvature $R$ are given by
\begin{equation}\label{eq:2.2}
R_{ijkl}=h_{ik}h_{jl}-h_{il}h_{jk}, \quad R_{ij}=Hh_{ij}-\sum_kh_{ik}h_{kj}, \quad R=H^2-S.
\end{equation}
\noindent
Defining the
covariant derivative of $h_{ij}$ by
\begin{equation*}
\sum_{k}h_{ijk}\omega_k=dh_{ij}+\sum_kh_{ik}\omega_{kj}
+\sum_k h_{kj}\omega_{ki},
\end{equation*}
we obtain the Codazzi equations
\begin{equation}\label{eq:2.3}
h_{ijk}=h_{ikj}.
\end{equation}
Defining
\begin{equation*}
\sum_lh_{ijkl}\omega_l=dh_{ijk}+\sum_lh_{ljk}\omega_{li}
+\sum_lh_{ilk}\omega_{lj}+\sum_l h_{ijl}\omega_{lk},
\end{equation*}
we have the following Ricci identities:
\begin{equation}\label{eq:2.4}
h_{ijkl}-h_{ijlk}=\sum_m
h_{mj}R_{mikl}+\sum_m h_{im}R_{mjkl}.
\end{equation}
For a smooth function $f$, the $\mathcal{L}$-operator is defined by
\begin{equation*}
\mathcal{L}f=\Delta f-\langle X,\nabla f\rangle,
\end{equation*}
where $\Delta$ and $\nabla$ denote the Laplacian and the gradient
operator, respectively.
For functions $f_3=\sum_{i,j,k}h_{ij}h_{jk}h_{ki}=\sum_i\lambda_i^3$ and
$f_4=\sum_{i,j,k,l}h_{ij}h_{jk}h_{kl}h_{li}=\sum_i\lambda_i^4$, where $\lambda_i$ denotes  the principal curvature,
we have 
\begin{equation}\label{eq:2.5}
 \begin{aligned}
 & \dfrac13\mathcal{L}f_3=(1-S)f_3+2\sum_{i,j,k}\lambda_ih_{ijk}^2,\\
&  \dfrac14\mathcal{L}f_4= (1-S)f_4+ 2A+B,
\end{aligned}
 \end{equation}
 where $ A = \sum_{i,j,k} \lambda_i^2 h_{ijk}^2$ and  $B = \sum_{i,j,k} \lambda_i \lambda_j h_{ijk}^2$.
\vskip1mm
\noindent
By a direct calculation, we can derive the following  formulas, which  can also  be found in  \cite{h2}, \cite{cm}, \cite{cw}.
\begin{lemma}
For  an $n$-dimensional  self-shrinker $X:M^n\rightarrow \mathbb{R}^{n+1}$  in $\mathbb R^{n+1}$,  we know
\begin{equation}\label{eq:2.6}
\dfrac{1}{2}\mathcal{L}
|X|^{2}=n-|X|^{2}, \quad \mathcal{L}H=H(1-S),
\end{equation}
\begin{equation}\label{eq:2.7} 
\dfrac{1}{2}\mathcal{L}S
=\sum_{i,j,k}h_{ijk}^{2}+(1-S)S,  \ \  \dfrac12\mathcal{L}H^2=|\nabla H|^2+H^2(1-S).
\end{equation}
If  $H>0$, we have
\begin{equation}\label{eq:2.8}
\begin{aligned}
\mathcal{L}\dfrac {1} {H^2}
&=-\dfrac{2(1-S)}{H^2}+\dfrac{6}{H^4}|\nabla H|^2,
\end{aligned}
\end{equation}
\begin{equation}\label{eq:2.9}
\begin{aligned}
\dfrac12\mathcal{L}\dfrac {S} {H^2}
&=\dfrac{1}{H^4}\sum_{i, j, k}|h_{ij}\nabla_kH-h_{ijk}H|^2-\dfrac1H\langle\nabla H,\nabla \dfrac {S}{H^2}\rangle.
\end{aligned}
\end{equation}
\end{lemma}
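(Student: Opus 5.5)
This is a lemma of direct computation, so I plan to derive each identity from the self-shrinker equation $H=-\langle X,N\rangle$ (with $N=e_{n+1}$) together with the structure equations and the Ricci identities already recorded above. I will use the definition $\mathcal{L}f=\Delta f-\langle X,\nabla f\rangle$ and the Leibniz-type rule $\mathcal{L}(fg)=f\mathcal{L}g+g\mathcal{L}f+2\langle\nabla f,\nabla g\rangle$ throughout.

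\textbf{Formulas for $|X|^2$ and $H$.}
For $|X|^2$, I will use $\Delta X=-HN$, which gives $\Delta|X|^2=2n-2H\langle X,N\rangle=2n+2H^2$. Since $\nabla|X|^2=2X^T$, we have $\langle X,\nabla|X|^2\rangle=2|X^T|^2=2(|X|^2-\langle X,N\rangle^2)=2(|X|^2-H^2)$. Subtracting the two yields $\tfrac12\mathcal{L}|X|^2=n-|X|^2$.

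For $H$, I will differentiate $H=-\langle X,e_{n+1}\rangle$ using $de_{n+1}=-\sum h_{ij}\omega_j e_i$. This gives $H_{,k}=\sum_i h_{ik}\langle X,e_i\rangle$. Differentiating once more, and using Codazzi, gives
\[
H_{,kl}=h_{kl}+\sum_i h_{ikl}\langle X,e_i\rangle - H\sum_i h_{ik}h_{il}.
\]
Tracing over $k=l$ and applying Codazzi again produces $\Delta H=H-HS+\langle X,\nabla H\rangle$, which is $\mathcal{L}H=H(1-S)$.

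\textbf{Formulas for $S$ and $H^2$.}
The computation of $\mathcal{L}H$ carries over to the full tensor: by Codazzi and the Ricci identities, Simons' identity becomes
\[
\Delta h_{ij}=H_{,ij}+Hh_{ik}h_{kj}-Sh_{ij}.
\]
Substituting the Hessian of $H$ computed above, the term $Hh_{ik}h_{kj}$ cancels. This leaves $\mathcal{L}h_{ij}=(1-S)h_{ij}$, which is the main point of the computation. Consequently,
\[
\tfrac12\mathcal{L}S=\sum h_{ij}\mathcal{L}h_{ij}+\sum h_{ijk}^2=\sum h_{ijk}^2+(1-S)S.
\]
Likewise, $\tfrac12\mathcal{L}H^2=H\mathcal{L}H+|\nabla H|^2$, which gives the second formula in \eqref{eq:2.7}.

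\textbf{Formulas for $1/H^2$ and $S/H^2$ when $H>0$.}
I will use the chain rule $\mathcal{L}\phi(u)=\phi'(u)\mathcal{L}u+\phi''(u)|\nabla u|^2$ with $u=H$ and $\phi(u)=u^{-2}$. Since $\phi'=-2u^{-3}$ and $\phi''=6u^{-4}$, this gives $-2H^{-3}H(1-S)+6H^{-4}|\nabla H|^2$, which is \eqref{eq:2.8}.

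For \eqref{eq:2.9}, I will expand
\[
\tfrac12\mathcal{L}(SH^{-2})=\tfrac12H^{-2}\mathcal{L}S+\tfrac12S\,\mathcal{L}H^{-2}+\langle\nabla S,\nabla H^{-2}\rangle.
\]
The $(1-S)$ terms cancel, leaving
\[
H^{-2}\sum h_{ijk}^2+3SH^{-4}|\nabla H|^2-2H^{-3}\langle\nabla S,\nabla H\rangle.
\]
I will then expand $H^{-4}\sum|h_{ij}H_k-h_{ijk}H|^2$, which equals
\[
H^{-4}\Big(S|\nabla H|^2-H\langle\nabla S,\nabla H\rangle+H^2\sum h_{ijk}^2\Big).
\]
I will also compute
\[
-\tfrac1H\langle\nabla H,\nabla(S/H^2)\rangle=-H^{-3}\langle\nabla H,\nabla S\rangle+2SH^{-4}|\nabla H|^2.
\]
Adding these two expressions matches the previous one term by term.

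The only genuinely delicate step is the Simons-type identity $\mathcal{L}h_{ij}=(1-S)h_{ij}$. It requires careful use of the Ricci identities and of the sign conventions in $de_{n+1}$ and $H=-\langle X,N\rangle$. Everything else is routine bookkeeping.
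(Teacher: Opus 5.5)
Your approach is the same direct computation the paper has in mind; the paper gives no details and cites Huisken, Colding--Minicozzi and Cheng--Wei. Your derivations of $\mathcal{L}H=H(1-S)$, the Simons-type identity $\mathcal{L}h_{ij}=(1-S)h_{ij}$, the formulas for $\mathcal{L}S$, $\mathcal{L}H^2$ and $\mathcal{L}H^{-2}$, and the term-by-term check of the formula for $\frac12\mathcal{L}(S/H^2)$ are all correct.

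The one step that fails as written is the very first. With the paper's conventions ($\omega_{i\,n+1}=\sum_j h_{ij}\omega_j$, $H=\sum_i h_{ii}$, $N=e_{n+1}$), the Hessian of the position vector is $\bar\nabla_{e_i}e_j-\nabla_{e_i}e_j=h_{ij}e_{n+1}$. So $\Delta X=+HN$, not $-HN$. Hence $\Delta|X|^2=2n+2H\langle X,N\rangle=2n-2H^2$, not $2n+2H^2$. With your two stated values the difference is $2n+4H^2-2|X|^2$, so the identity $\frac12\mathcal{L}|X|^2=n-|X|^2$ does not follow from what you wrote. A quick sanity check is $S^n(\sqrt n)$ with $N=-X/\sqrt n$: there $H=\sqrt n$ and $\Delta X=-X=HN$.

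The correct fact is already inside your computation of $H_{,kl}$. Differentiating $\langle X,e_i\rangle$ using $de_i=\sum_j\omega_{ij}e_j+\omega_{i\,n+1}e_{n+1}$ and $\langle X,e_{n+1}\rangle=-H$ gives $\langle X,e_i\rangle_{,k}=\delta_{ik}-Hh_{ik}$, which is exactly the Hessian of $\frac12|X|^2$. Tracing gives $\frac12\Delta|X|^2=n-H^2$, and subtracting $\frac12\langle X,\nabla|X|^2\rangle=|X|^2-H^2$ gives $n-|X|^2$. The sign $\Delta X=-HN$ belongs to the opposite orientation convention, in which the shrinker equation reads $H=+\langle X,N\rangle$. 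Combining it with $H=-\langle X,N\rangle$ is what produced the error.
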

\noindent
The following generalized maximum principle for $\mathcal L$-operator due to Cheng and Peng \cite{cp} will play an important role.
\vskip2mm
\noindent
{\bf Generalized maximum principle for $\mathcal L$-operator}. 
Let $X : M^{n}\to \mathbb{R}^{n+p}$ be a complete self-shrinker with Ricci curvature bounded from below.
Let $f$ be any $C^{2}$-function bounded from above on this self-shrinker.
Then, there exists a sequence of points $\{p_{m}\}\subset M^{n}$, such that
\begin{equation*}
\lim_{m\rightarrow\infty} f(p_{m})=\sup f,\quad
\lim_{m\rightarrow\infty} |\nabla f|(p_{m})=0,\quad
\limsup_{m\rightarrow\infty}\mathcal{L}f(p_{m})\leq 0.
\end{equation*}
 \vskip10mm
 
 \section{Proof of  theorem 1.2}
\vskip2mm
\noindent
In order to prove our theorem 1.2, we need to remove the condition of polynomial volume growth 
in the theorem 1.1 of Cheng and Wei \cite{cw} as follows.
\begin{theorem}\label{thm3-1}
An $n$-dimensional complete self-shrinker  $X: M\rightarrow \mathbb{R}^{n+1}$
 with constant  squared norm of the second fundamental form
is isometric to one of  
\begin{enumerate}
\item  $S^{n}(\sqrt{n})$,
\item $\mathbb{R}^{n}$, 
\item
$S^k (\sqrt k)\times \mathbb{R}^{n-k}$, $1\leq k\leq n-1$.
\end{enumerate}
if  $S\leq \dfrac{7}{5}$.
\end{theorem}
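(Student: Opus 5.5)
The plan is to follow the scheme of Cheng and Wei, replacing every integral identity by the generalized maximum principle of Cheng and Peng. Since $S$ is constant, the Gauss equation gives $R_{ij}=Hh_{ij}-\sum_k h_{ik}h_{kj}\geq -\tfrac{S}{\!}\cdot C$, so the Ricci curvature is bounded from below. Hence the generalized maximum principle applies to any bounded-above $C^2$ function built from $h_{ij}$ and its derivatives.

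\textbf{Step 1: the dichotomy $S=0$ or $S\ge 1$, with classification at $S=1$.} By (2.7), $\frac12\mathcal L S=0$ gives $\sum h_{ijk}^2=S(S-1)$. If $S<1$ and $S\neq 0$ this is negative, which is impossible. If $S=0$ we obtain $\mathbb R^n$. If $S=1$, then $\nabla h\equiv 0$, so the hypersurface is isoparametric with parallel second fundamental form. Lawson's classification then yields $S^k(\sqrt k)\times\mathbb R^{n-k}$, and the self-shrinker equation fixes the radii.

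\textbf{Step 2: rule out $1<S\le 7/5$.} In this range $\sum h_{ijk}^2=S(S-1)>0$ is a positive constant. I will use $\frac12\mathcal L\sum h_{ijk}^2=0$ together with the Simons-type identity
\[
\tfrac12\mathcal L|\nabla h|^2=|\nabla^2h|^2+(2-S)|\nabla h|^2+3(2B-A)-\tfrac32|\nabla S|^2,
\]
adapted to self-shrinkers with $\nabla S=0$. This gives
\[
\sum h_{ijkl}^2=(S-2)S(S-1)+3(A-2B).
\]
Next I will estimate $A-2B$ through $f_3$ and $f_4$. The functions $f_3$ and $f_4$ are bounded since $S$ is constant, so the generalized maximum principle applies to them and to $\pm f_3$. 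By (2.5), at sequences $p_m$ realizing $\sup f_4$ and $\inf f_4$ we get $\limsup\,[(1-S)f_4+2A+B]\le0$ and the reverse inequality. We also pass to the limit in $\nabla f_3(p_m)\to 0$ and $\nabla f_4(p_m)\to0$, i.e. $\sum_i\lambda_i^2h_{iik}\to0$ and $\sum_i\lambda_i^3h_{iik}\to 0$, together with $\sum_i\lambda_ih_{iik}=0$ from $\nabla S=0$. Choosing a subsequence along which $\lambda_i$, $h_{ijk}$ and $h_{ijkl}$ converge (all are bounded), these become algebraic relations at a limit point.

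\textbf{Step 3: the algebraic lower bound for $\sum h_{ijkl}^2$.} Following Cheng--Wei (and the $n=3$ works), I will bound $\sum h_{ijkl}^2$ from below using the Ricci identities (2.4). These give
\[
h_{ijij}-h_{jiji}\ \text{-type terms}=(\lambda_i-\lambda_j)(1+\lambda_i\lambda_j)\text{-like expressions}
\]
after using $h_{ijkk}$ from the self-shrinker equation. Concretely, $\sum_k h_{ijkk}=(1-S)h_{ij}+\dots$ follows from the derivative of $H=-\langle X,N\rangle$, i.e. $H_{,ij}=-\sum_k h_{ijk}\langle X,e_k\rangle+h_{ij}-H\sum_k h_{ik}h_{kj}$. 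This yields an inequality of the form
\[
\sum h_{ijkl}^2\ \ge\ c_1\,S(S-1)^2+\dots,
\]
which must be compared with the upper bound $(S-2)S(S-1)+3(A-2B)$. Using $A-2B\le$ an expression controlled by $\sqrt S\,|\nabla h|^2$ via the pinching estimates at the limit point, the two bounds are incompatible when $1<S\le 7/5$. This forces $S\le1$, which brings us back to Step 1.

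\textbf{Main obstacle.} The hardest part is the precise inequality in Step 3. Without integration one cannot average away the cross terms, so every term must be controlled pointwise at the limit of the maximizing sequence, where only approximate first-order conditions hold. I would first try the approach of Cheng--Wei: split indices according to whether $\lambda_i$ are equal and carefully estimate $A-2B\le \frac{\sqrt{S}}{\ldots}|\nabla h|^2$. The threshold $7/5$ should emerge from optimizing this estimate.
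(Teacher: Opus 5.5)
Your Step 1 is correct and coincides with the paper. The relation $\sum_{i,j,k}h_{ijk}^2=S(S-1)$ forces $S\in\{0\}\cup[1,\infty)$. For $S\le 1$ the second fundamental form is parallel, and Lawson's theorem gives the list. The identity $\sum h_{ijkl}^2=S(S-1)(S-2)+3(A-2B)$ in your Step 2 is also the paper's (3.2).

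The gap is that you never actually argue the exclusion of $1<S\le\frac75$, which is the entire content of the theorem. First, your use of the generalized maximum principle cannot work as stated. Applying it separately to $f_3$ and to $\pm f_4$ produces different sequences. So $\nabla f_3\to0$, $\nabla f_4\to 0$ and $(S-1)f_4\ge 2A+B$ hold at different limit points. They cannot be combined with one another, or with a pointwise lower bound for $\sum h_{ijkl}^2$; everything must come from one function at one limit point. The paper does this with $F=\frac14Sf_4-\frac c6f_3^2$. Since $\mathcal L f_3^2=2f_3\mathcal Lf_3+2|\nabla f_3|^2$, the condition $\limsup\mathcal LF(p_m)\le0$ gives the single inequality (3.6). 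It links $(S-1)(Sf_4-cf_3^2)$, $2A+B$, $f_3\sum\lambda_ih_{ijk}^2$ and $\sum_j(\sum_i\lambda_i^2h_{iij})^2$. The gradient term is not set to zero. It is kept with a sign and controlled by the estimate $\sum_k(\sum_i\lambda_i^2h_{iik})^2\le\frac{1+2\alpha}3S(S-1)f$ of Lemma 3.1. The free constant $c$ produces the thresholds: $c=2$ gives $S>\frac65$, then $c=1+c_1(t)$ gives $S>\frac75$.

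Second, your Step 3 never produces the lower bound for $\sum h_{ijkl}^2$, and the route you indicate does not fit this setting. Writing $\sum_kh_{ijkk}$ via $\mathcal Lh_{ij}=(1-S)h_{ij}$ introduces $\sum_kh_{ijk}\langle X,e_k\rangle$. This term is unbounded on a non-compact shrinker and cannot be evaluated at a limit point. Also, the Ricci identity here gives $h_{iijj}-h_{jjii}=\lambda_i\lambda_j(\lambda_i-\lambda_j)$, not a $(1+\lambda_i\lambda_j)$ factor; that factor belongs to the spherical case.

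The paper's bound (Proposition 3.1) is purely algebraic. It symmetrizes to $u_{ijkl}$ and uses $\sum h_{ijkl}^2=\sum u_{ijkl}^2+\frac32(Sf_4-f_3^2)$. It uses the constraint $\sum_{i,j}h_{ij}h_{ijkl}=-\sum_{i,j}h_{ijk}h_{ijl}$, which comes from constancy of $S$. It then applies Cauchy--Schwarz against the test tensor $\delta(h_{ij}b_{kl}+h_{kl}b_{ij})+(\beta_ih_{jkl}+\cdots)+\gamma h_{ij}h_{kl}$, with $b_{ij}=(\lambda_i^2-\frac{f_3}S\lambda_i)\delta_{ij}$ and carefully chosen $\delta,\beta_i,\gamma$. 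This is combined with (3.6), the pointwise estimates of Lemma 3.1 for $f$, $A-B$ and $(\sum\lambda_ih_{ijk}^2)^2$, and a case split on the sign of the cross terms. Only then do the numerics give $S>\frac75$. A generic estimate $3(A-2B)\le C\,S\sum h_{ijk}^2$ against a crude lower bound does not yield $\frac75$. Asserting that the two bounds are ``incompatible'' and that the threshold ``should emerge'' therefore leaves the theorem unproved.
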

\begin{remark} Outline of proof of theorem \ref{thm3-1}. In  the theorem \ref{thm3-1}, we do not assume 
that self-shrinker  $X: M\rightarrow \mathbb{R}^{n+1}$ has polynomial volume growth. We can not use the integral formulas 
and Stokes theorem as in \cite{cm} and 
\cite{cw}. Since $S$ is constant, according to the Gauss equation, we know that the Ricci curvature is bounded
and functions $f_3=\sum_{i}\lambda_i^3$  and $f_4=\sum_{i}\lambda_i^4$ are bounded, where $\lambda_i$ denotes the  principal curvature.
In  step 1, we want to prove $S>\frac65$ if $S>1$.
We  apply the generalized maximum principle to a function $F=\dfrac14Sf_4-\dfrac 16 c f_3^2$  for a given constant $c$.
For our purposes, we will take different values of $c$ in the different cases. In order to prove our assertion, we need to 
symmetrize $h_{ijkl}$ by using $u_{ijkl}=\frac14(h_{ijkl}+h_{jkli}+h_{klij}+h_{lijk})$. In step 2, we need to prove $S>\frac75$ if $S>\frac65$.
In this case, we must do more precise estimates for $u_{ijkl}$, which is  a very hard work. Thus, we obtain $S\leq 1$ if $S\leq \frac75$.
Since $S$ is constant, in view of (\ref{eq:2.7}), we have  $h_{ijk}\equiv 0$.  Thus, we know that the second fundamental form of 
self-shrinker  $X: M\rightarrow \mathbb{R}^{n+1}$ is parallel. According to the results due to Lawson \cite{l},  we finish our proof.
\end{remark}
\noindent
We will  use the same notation as in \cite{cw}.
The following pointwise estimates can be found in Cheng and Wei \cite{cw}. 
Define  $\lambda_1=\max\limits_i\{\lambda_i\} $ and  $\lambda_2=\min\limits_i\{\lambda_i\}$ at each point.
\begin{lemma}
\begin{equation}\label{eq:3.1}
\begin{aligned}
&f=f_4-\frac{f_3^2}{S}\geq \frac{(\lambda_1-\lambda_2)^2}{\lambda_1^2+\lambda_2^2}(\lambda_1\lambda_2)^2,\\
&A-B \le \frac 13(\lambda_1-\lambda_2)^2S(S-1)(1-\alpha),\\
&
 \sum_k\biggl(\sum_i\lambda_i^2h_{iik}\biggl)^2\le \frac{1+2\alpha}3S(S-1)f,\\
 &\biggl(\sum_{i,j,k}\lambda_ih_{ijk}^2\biggl)^2 \le \biggl[\frac 13(A+2B)
-\frac {4}3\sum_k\frac 1{S+2\lambda_k^2}\left( \sum_i
\lambda_i^2h_{iik}\right)^2\biggl]S(S-1),
\end{aligned}
\end{equation}
where  $\alpha=\dfrac{\sum_ih_{iii}^2 }{tS^2}$. 
\end{lemma}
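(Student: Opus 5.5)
Since $S$ is constant, (\ref{eq:2.7}) gives $\sum_{i,j,k}h_{ijk}^2=S(S-1)$, and $\tfrac12 S_k=\sum_i\lambda_ih_{iik}=0$ for every $k$. I work at a point in a frame with $h_{ij}=\lambda_i\delta_{ij}$. I use the Codazzi symmetry (\ref{eq:2.3}), which makes $h_{ijk}$ totally symmetric. I read $\alpha$ as the fraction $\sum_ih_{iii}^2/\bigl(S(S-1)\bigr)$ of $|\nabla h|^2$ carried by the fully diagonal components; the normalization in the statement should reduce to this. Each inequality then comes from an algebraic identity followed by a Cauchy--Schwarz argument, which is tight in the relevant directions.

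\emph{First inequality.} For any real $c$,
$$
\sum_i\lambda_i^2(\lambda_i-c)^2=f_4-2cf_3+c^2S,
$$
and this is minimized at $c=f_3/S$, where it equals $f$. So $f=\min_c\sum_i\lambda_i^2(\lambda_i-c)^2$. I drop all indices except the ones realizing $\lambda_1$ and $\lambda_2$ and minimize the quadratic $\lambda_1^2(\lambda_1-c)^2+\lambda_2^2(\lambda_2-c)^2$ in $c$. This gives exactly $\dfrac{\lambda_1^2\lambda_2^2(\lambda_1-\lambda_2)^2}{\lambda_1^2+\lambda_2^2}$.

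\emph{Second inequality.} By total symmetry of $h_{ijk}$,
$$
A-B=\frac16\sum_{i,j,k}\bigl[(\lambda_i-\lambda_j)^2+(\lambda_j-\lambda_k)^2+(\lambda_k-\lambda_i)^2\bigr]h_{ijk}^2 .
$$
Three numbers in an interval of length $d=\lambda_1-\lambda_2$ have sum of squared pairwise differences at most $2d^2$. The bracket also vanishes when $i=j=k$. Hence
$$
A-B\le\tfrac13(\lambda_1-\lambda_2)^2\Bigl(S(S-1)-\sum_ih_{iii}^2\Bigr)=\tfrac13(\lambda_1-\lambda_2)^2S(S-1)(1-\alpha).
$$

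\emph{Third inequality.} Since $\sum_i\lambda_ih_{iik}=0$, I can write
$$
\sum_i\lambda_i^2h_{iik}=\sum_i\lambda_i(\lambda_i-c)h_{iik},\qquad c=f_3/S .
$$
Cauchy--Schwarz and the first step give $\bigl(\sum_i\lambda_i^2h_{iik}\bigr)^2\le f\sum_ih_{iik}^2$. Summing over $k$ leaves $\sum_{i,k}h_{iik}^2=\sum_ih_{iii}^2+\sum_{i\ne k}h_{iik}^2$ to control. Each $h_{iik}$ with $i\neq k$ occurs three times in $|\nabla h|^2$, so $\sum_{i\ne k}h_{iik}^2\le\frac13(1-\alpha)S(S-1)$. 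Therefore $\sum_{i,k}h_{iik}^2\le\frac{1+2\alpha}3S(S-1)$.

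\emph{Fourth inequality, the main obstacle.} Symmetrizing gives $\sum\lambda_ih_{ijk}^2=\sum w_{ijk}h_{ijk}^2$ with $w_{ijk}=\frac13(\lambda_i+\lambda_j+\lambda_k)$. Plain Cauchy--Schwarz, $(u\cdot v)^2\le|u|^2|v|^2$ with $u=(h_{ijk})$ and $v=(w_{ijk}h_{ijk})$, yields $|v|^2=\frac19(3A+6B)=\frac13(A+2B)$. That is the main term.

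To get the subtracted term, I use the sharper bound $(u\cdot v)^2\le|u|^2\bigl(|v|^2-|P_Zv|^2\bigr)$, where $P_Z$ is the projection onto a subspace $Z\perp u$. The constraints $\sum_i\lambda_ih_{iik}=0$ supply such a subspace. For each $k$, let $z^{(k)}$ be supported on the slots $(i,i,k),(i,k,i),(k,i,i)$ with entry $\lambda_i$, adjusted at $i=k$. Then $u\cdot z^{(k)}$ is a multiple of $\sum_i\lambda_ih_{iik}=0$.

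\begin{itemize}
\item $v\cdot z^{(k)}$ is proportional to $\sum_i\lambda_i^2h_{iik}$ plus multiples of the vanishing quantity. This is where the constant $\frac43$ should come from.
\item For $k\neq l$ the vectors $z^{(k)}$ and $z^{(l)}$ overlap only on a few slots, and $|z^{(k)}|^2$ should be $3(S+2\lambda_k^2)$ or comparable. This is the source of $1/(S+2\lambda_k^2)$.
\end{itemize}

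The hard part is the bookkeeping of the diagonal overlaps. Exact orthogonality may fail, so I would first try a slightly modified family, or a Gram-matrix lower bound $|P_Zv|^2\ge\sum_k (v\cdot z^{(k)})^2/|z^{(k)}|^2$, valid when the $z^{(k)}$ are orthogonal. I expect this to reproduce exactly the coefficient $\frac43\sum_k\frac{1}{S+2\lambda_k^2}\bigl(\sum_i\lambda_i^2h_{iik}\bigr)^2$.
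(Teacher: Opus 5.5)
Your proposal is correct. The paper gives no proof of this lemma; it only quotes the four estimates from Cheng--Wei \cite{cw}, so there is no in-paper argument to compare against. Your first three steps are complete as written. Your reading of $\alpha$ is exact, not just a normalization guess, because $tS^2=S(S-1)$.

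The only unfinished point is the fourth inequality. It closes exactly, and no modified family is needed.

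Take $z^{(k)}$ with entry $\lambda_i$ at $(i,i,k),(i,k,i),(k,i,i)$ for $i\neq k$, and entry $3\lambda_k$ at $(k,k,k)$. This diagonal value is forced by requiring $u\cdot z^{(k)}=3\sum_i\lambda_ih_{iik}=0$. Then
\[
|z^{(k)}|^2=3\sum_{i\neq k}\lambda_i^2+9\lambda_k^2=3(S+2\lambda_k^2),\qquad
v\cdot z^{(k)}=\sum_i(2\lambda_i^2+\lambda_i\lambda_k)h_{iik}=2\sum_i\lambda_i^2h_{iik}.
\]

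Contrary to your worry, the supports do not overlap at all. Every slot in the support of $z^{(k)}$ has index multiset $\{i,i,k\}$, and $\{i,i,k\}=\{j,j,l\}$ forces $k=l$. So the $z^{(k)}$ are exactly orthogonal, and
\[
|P_Zv|^2=\sum_k\frac{(v\cdot z^{(k)})^2}{|z^{(k)}|^2}=\frac43\sum_k\frac{1}{S+2\lambda_k^2}\Bigl(\sum_i\lambda_i^2h_{iik}\Bigr)^2 .
\]
Now use $(u\cdot v)^2\le |u|^2\bigl(|v|^2-|P_Zv|^2\bigr)$ with $|u|^2=S(S-1)$ and $|v|^2=\frac13(A+2B)$. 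This is precisely the fourth estimate. Nothing degenerates, since $S+2\lambda_k^2\ge S>1$.

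Equivalently, subtract the vanishing quantity $\sum_k\beta_k\cdot 3\sum_i\lambda_ih_{iik}$ from $\sum\lambda_ih_{ijk}^2$ and apply Cauchy--Schwarz. The optimal choice is $\beta_k=\frac{2\sum_i\lambda_i^2h_{iik}}{3(S+2\lambda_k^2)}$. This parameter-optimization form is how such estimates are usually written down.
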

\noindent
Since $S$ is constant, we have
\begin{equation}\label{eq:3.2}
\begin{aligned}
&\sum_{i,j,k}h_{ijk}^2= S(S-1),\ \ \mathcal L h_{ij}=(1-S)h_{ij},\\
&\sum_{i,j,k,l}h_{ijkl}^2=S(S-1)(S-2) +3(A-2B).
\end{aligned}
\end{equation}
Defining $u_{ijkl}$ by
$$
  u_{ijkl}:=\frac 14(h_{ijkl}+h_{jkli}+h_{klij}+h_{lijk} ),
  $$
 in view of the Ricci identity (\ref{eq:2.4}),  a direct computation yields
 \begin{equation}\label{eq:3.3}
\begin{aligned}
  \sum_{i,j,k,l}h_{ijkl}^2 = \sum_{i,j,k,l}u_{ijkl}^2 +\frac32\bigl( Sf_4-f_3^2\bigl).
\end{aligned}
\end{equation}
If  $S> 1$,  defining $S-1=tS$, we  know $0<t\leq \dfrac27$ for  $S\leq \frac75$. 
Thus,  we obtain the  following:

\begin{proposition}\label{pro3.1} For an $n$-dimensional  self-shrinker  $X: M\rightarrow \mathbb{R}^{n+1}$
 with constant  squared norm of the second fundamental form and $S>1$, we have
\begin{equation}\label{eq:3.4}
\begin{aligned}
&S(S-1)(S-2) +3(A-2B) \\
&\ge 2 Sf-2A+\frac 4{S(S-1)}\sum_i\lambda_i^2\big (\sum_j\lambda_j^2h_{jji}\big)^2\\
&+\frac {2f_3}S \sum_{i,j,k}\lambda_ih_{ijk}^2  +\frac 1{S^2}\big(\sum_{i,j,k}\lambda_ih_{ijk}^2 \big)^2.
\end{aligned}
\end{equation}
\end{proposition}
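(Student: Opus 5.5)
The plan is to start from the identity (\ref{eq:3.3}) and the last line of (\ref{eq:3.2}). Together they give
$$
S(S-1)(S-2)+3(A-2B)=\sum_{i,j,k,l}u_{ijkl}^2+\frac32 Sf,\qquad f=f_4-\frac{f_3^2}{S}.
$$
So (\ref{eq:3.4}) is equivalent to the lower bound
$$
\sum_{i,j,k,l}u_{ijkl}^2\ \ge\ \frac12 Sf-2A+\frac{4}{S(S-1)}\sum_i\lambda_i^2\Big(\sum_j\lambda_j^2h_{jji}\Big)^2+\frac{2f_3}{S}\sum_{i,j,k}\lambda_ih_{ijk}^2+\frac1{S^2}\Big(\sum_{i,j,k}\lambda_ih_{ijk}^2\Big)^2 .
$$
Since $u_{ijkl}$ is totally symmetric, I would bound $\sum u_{ijkl}^2$ from below by its squared projections onto a few explicit totally symmetric test tensors. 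I choose these so that they are mutually orthogonal, or at least so that Cauchy--Schwarz can be applied to their span. All computations are done at a point, in a frame with $h_{ij}=\lambda_i\delta_{ij}$.

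\textbf{First test tensor.} Take $T^{(1)}=\mathrm{sym}(h\otimes h)$, i.e. $T^{(1)}_{ijkl}=\frac13(h_{ij}h_{kl}+h_{ik}h_{jl}+h_{il}h_{jk})$. Differentiating $\sum h_{ij}h_{ijk}=0$ (which holds since $S$ is constant) gives $\sum h_{ij}h_{ijkl}=-\sum h_{ijk}h_{ijl}$. Contracting with $h_{kl}$ gives $\sum h_{ij}h_{kl}h_{ijkl}=-\sum\lambda_kh_{ijk}^2$. Passing from $h_{ijkl}$ to $u_{ijkl}$ via the Ricci identity (\ref{eq:2.4}) only adds curvature terms. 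In the diagonal frame these are polynomial in $\lambda$ and, after contraction with $T^{(1)}$, I expect them to cancel or to reduce to multiples of $Sf_4-f_3^2=Sf$. This yields $\langle u,T^{(1)}\rangle$ as a combination of $\sum\lambda_ih_{ijk}^2$ and $Sf$. Since $|T^{(1)}|^2=\frac13(S^2+2f_4)$, this contribution should produce the terms $\frac1{S^2}(\sum\lambda_ih_{ijk}^2)^2$ and $\frac{2f_3}{S}\sum\lambda_ih_{ijk}^2$ (the latter as a cross term), with a leftover multiple of $Sf$.

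\textbf{Second family of test tensors.} Next, for each fixed index $m$, take tensors built from $\lambda^2$ and the vector $v_m=\sum_j\lambda_j^2h_{jjm}=\frac13\nabla_mf_3$. To control $\langle u,\cdot\rangle$ here I would differentiate once more the identities coming from $\mathcal L h_{ij}=(1-S)h_{ij}$ and from $\nabla S=0$, namely $\sum h_{ij}h_{ijk}=0$ together with its derivative. This expresses certain contractions of $u_{ijkl}$ with $h$ and $h^2$ in terms of $h_{ijk}$-quadratics, and these are what yield the $\frac{4}{S(S-1)}\sum_i\lambda_i^2(\sum_j\lambda_j^2h_{jji})^2$ term. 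The normalization $S(S-1)=\sum h_{ijk}^2$ should enter through Cauchy--Schwarz against $\sum_{i,j,k}h_{ijk}^2$.

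\textbf{Remaining terms and main obstacle.} The terms $\frac12Sf-2A$ are allowed to be negative, and they absorb whatever remains. I would check that, after subtracting the projections, everything left over is either a nonnegative sum of squares of off-projection components of $u$, or is dominated using $A\ge 0$, with $-2A$ taken into account and $f\ge0$ from (\ref{eq:3.1}). The main obstacle is the exact bookkeeping of the Ricci-identity commutator terms when symmetrizing $h_{ijkl}$ into $u_{ijkl}$, together with the orthogonality, or the correct joint Cauchy--Schwarz, between the test tensors. If orthogonality fails, I would first try to Gram--Schmidt the second family against $T^{(1)}$ and accept the weaker constants. I expect the constants $4$, $2$, $1$ in (\ref{eq:3.4}) to come out precisely from $|T^{(1)}|^2$ and from the normalization of the $v_m$-tensors.
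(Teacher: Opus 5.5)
\textbf{There is a genuine gap.} Your reduction is correct. By (3.2) and (3.3), $S(S-1)(S-2)+3(A-2B)=\sum u_{ijkl}^2+\frac32 Sf$. Write $P=\sum_{i,j,k}\lambda_ih_{ijk}^2$, $v_i=\sum_j\lambda_j^2h_{jji}$ and $W=\sum_i\lambda_i^2v_i^2$. Then (3.4) is equivalent to $\sum u_{ijkl}^2\ge \frac12Sf-2A+\frac{4W}{S(S-1)}+\frac{2f_3}{S}P+\frac{P^2}{S^2}$.

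The problem is that nothing in your plan produces the terms $\frac12Sf-2A+\frac{2f_3}{S}P$. They are not slack that ``absorbs whatever remains'': $\frac12Sf-2A$ can be strictly positive, and then it must be matched by an actual lower bound on $\sum u_{ijkl}^2$.

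Your first tensor cannot supply it. Since $u$ is symmetric, $\langle u,T^{(1)}\rangle=\sum_{i,k}h_{iikk}\lambda_i\lambda_k=-P$ exactly. The Ricci commutator terms cancel by antisymmetry in $i\leftrightarrow k$, so there is no leftover $Sf$. A single projection then gives only $P^2/|T^{(1)}|^2$, with no cross term involving $f_3$.

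A concrete check shows the plan fails. Take a point with $n=3$, $\lambda=(a,b,0)$, $ab\neq 0$, $a\neq b$, $a^2+b^2>1$, and $h_{ijk}$ supported on $h_{333}$ with $h_{333}^2=S(S-1)$. The constraints $\sum_i\lambda_ih_{iik}=0$ and $\sum h_{ijk}^2=S(S-1)$ hold, and $A=B=P=0$, $v=0$. Every inner product you propose vanishes, so you get only $\sum u_{ijkl}^2\ge 0$. The target, however, is $\sum u_{ijkl}^2\ge\frac12 Sf$ with $f=a^2b^2(a-b)^2/(a^2+b^2)>0$.

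\textbf{The missing test tensor.} What you need is $h_{ij}b_{kl}+h_{kl}b_{ij}$ with $b_{ij}=(\lambda_i^2-\frac{f_3}{S}\lambda_i)\delta_{ij}$. This is $h\otimes h^2$ orthogonalized against $h\otimes h$, so $\sum h_{ij}b_{ij}=0$ and its squared norm is $2Sf$. The key computation is $\sum u_{ijkl}h_{ij}b_{kl}=-A+\frac12Sf+\frac{f_3}{S}P$:
\begin{itemize}
\item the $-A$ comes from contracting $\sum h_{ij}h_{ijkl}=-\sum h_{ijk}h_{ijl}$ with $\lambda_k^2\delta_{kl}$;
\item the $\frac12Sf$ comes from the Ricci identity in the form $\sum_{i,j}\lambda_i^2\lambda_j(h_{iijj}-h_{jjii})=Sf$, which is where the commutator terms genuinely contribute;
\item the $\frac{f_3}{S}P$ is the Gram--Schmidt by-product.
\end{itemize}

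\textbf{The $W$-term tensor.} The right tensor here is $\beta_ih_{jkl}+\beta_jh_{ikl}+\beta_kh_{ijl}+\beta_lh_{ijk}$ with $\beta_i\propto\lambda_iv_i$, built from $h_{ijk}$ rather than from $\lambda^2$. Its inner product with $u$ comes from $\sum_{i,j,k}u_{ijkl}h_{ijk}=-\frac32\lambda_lv_l$. That identity follows from $\sum h_{ijk}h_{ijkl}=0$, the Ricci identity and $\sum_i\lambda_ih_{iil}=0$, not from $\mathcal Lh_{ij}=(1-S)h_{ij}$. By Cauchy--Schwarz its squared norm is at most $16|\beta|^2S(S-1)$, which is where $S(S-1)=\sum h_{ijk}^2$ enters.

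This tensor is not orthogonal to the $b$-tensor: their inner product is $4\sum_i\beta_i\lambda_iv_i$. So the paper expands
$$\sum\{u_{ijkl}+\delta(h_{ij}b_{kl}+h_{kl}b_{ij})+(\beta_ih_{jkl}+\beta_jh_{ikl}+\beta_kh_{ijl}+\beta_lh_{ijk})+\gamma h_{ij}h_{kl}\}^2\ge0$$
and chooses $\gamma,\beta,\delta$ jointly. The constants $\frac12$ and $4$ in (3.4) then come from the final estimate $(x+y)^2\ge 4xy$, not from $|T^{(1)}|^2$.
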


\begin{proof} In order to give a more precise estimate for  $u_{ijkl}$, we define $b_{ij}$ by 
$$
b_{ij}=(\lambda_i^2-\dfrac{f_3}{S}\lambda_i)\delta_{ij}.
$$
Direct computations yield
\begin{equation*}
\begin{aligned}
&\sum_{i,j}h_{ij}b_{ij}=0, \ \ \sum_{i,j}b_{ij}^2=f_4-\dfrac{f_3^2}{S}=f.\\
\end{aligned}
\end{equation*}
Since $S$ is constant, we have
\begin{equation*}
\begin{aligned}
&\sum_{i,j}h_{ij}h_{ijkl}+\sum_{i,j}h_{ijk}h_{ijl}=0,\quad \sum_{i,j,k}h_{ijk}h_{ijkl}=0.\\
\end{aligned}
\end{equation*}
Hence, we get
\begin{equation*}
\begin{aligned}
&\sum_{i,j,k,l}h_{ij}h_{ijkl}h_{kl}=-\sum_{i,j,k}\lambda_ih_{ijk}^2,\\
&\sum_{i,j}\lambda_ih_{iijj}\lambda_j^2=\sum_{i,j,k,l}h_{ij}h_{ijkl}\lambda_k\lambda_l\delta_{kl}=-A,\\
&\sum_{i,j}\lambda_i^2h_{iijj}\lambda_j=\sum_{i,j}\lambda_i^2h_{iijj}\lambda_j-\sum_{i,j}\lambda_ih_{iijj}\lambda_j^2-A\\
&=\sum_{i,j}\lambda_i^2\lambda_j(h_{iijj}-h_{jjii})-A=-A+Sf.
\end{aligned}
\end{equation*}
Since $u_{ijkl}$ are symmetric in $i, j, k, l$, we have
\begin{equation*}
\begin{aligned}
&\sum_{i,j,k,l}u_{ijkl}h_{ij}b_{kl}=-A+\dfrac12Sf+\dfrac{f_3}S\sum_{i,j,k}\lambda_ih_{ijk}^2,\\
&\sum_{i,j,k}u_{ijkl}h_{ijk}=-\dfrac{3}2\sum_i\lambda_i^2h_{iil}\lambda_l,\\
&\sum_{i,j,k}b_{ij}h_{kl}h_{ijk}=\sum_{i,j}\lambda_i^2h_{iij}h_{jl}.\\
\end{aligned}
\end{equation*}
For any $\delta$, $\beta_i$ and $\gamma $, we know 
\begin{equation*}
\begin{aligned}
&\sum_{i,j,k,l}\bigl\{u_{ijkl}+\delta (h_{ij}b_{kl}+h_{kl}b_{ij})+(\beta_ih_{jkl}+\beta_jh_{ikl}+\beta_kh_{ijl}+\beta_lh_{ijk})
+\gamma h_{ij}h_{kl}\bigl\}^2\geq 0.
\end{aligned}
\end{equation*}
If $f=0$, since $2Sf=\sum_{i,j}\lambda_i^2\lambda_j^2(\lambda_i-\lambda_j)^2$, we know 
all  non-zero principal curvatures are equal.  From the third inequality in lemma 3.1, we have
$$
\sum_i\bigl(\sum_{j}\lambda_j^2h_{jji}\bigl)^2=0.
$$
In this case,  by taking 
$$
\gamma=\dfrac{1}{S^2}\sum_{i,j,k}\lambda_ih_{ijk}^2, \quad \delta=-1, \quad \beta_i=0,
$$
we get 
\begin{equation*}
\begin{aligned}
&\sum_{i,j,k,l}u_{ijkl}^2
&\ge -2A+\frac {2f_3}S \sum_{i,j,k}\lambda_ih_{ijk}^2  +\frac 1{S^2}\big(\sum_{i,j,k}\lambda_ih_{ijk}^2 \big)^2.
\end{aligned}
\end{equation*}
The inequality (3.4) is proved.\\
Next, we  consider $f>0$. 
By making use of the above formulas and taking 
\begin{equation*}
\begin{aligned}
&\gamma=\dfrac{1}{S^2}\sum_{i,j,k}\lambda_ih_{ijk}^2,  \quad \beta_i=\dfrac{-2\delta+3}{8{S(S-1)}}\sum_{j}\lambda_j^2h_{jji}\lambda_i,\\
&\delta=-{\dfrac{2S(S-1)\bigl(-A+\dfrac12Sf+\dfrac{f_3}S\sum_{i,j,k}\lambda_ih_{ijk}^2+\dfrac 3{4S(S-1)}\sum_i\lambda_i^2\big (\sum_j\lambda_j^2h_{jji}\big)^2\bigl)}{2S^2(S-1)f-\sum_j\lambda_j^2(\sum_i\lambda_i^2h_{iij})^2}},
\end{aligned}
\end{equation*}
a long and direct computation yields
\begin{equation*}
\begin{aligned}
&\sum_{i,j,k,l}u_{ijkl}^2
\geq \dfrac{\biggl(-2A+Sf+\dfrac{2f_3}S\sum_{i,j,k}\lambda_ih_{ijk}^2
+\dfrac 3{2S(S-1)}\sum_i\lambda_i^2\big (\sum_j\lambda_j^2h_{jji}\big)^2\biggl)^2}
{2Sf-\dfrac{\sum_j\lambda_j^2(\sum_i\lambda_i^2h_{iij})^2}{S(S-1)}}\\
&+\frac 1{S^2}\big(\sum_{i,j,k}\lambda_ih_{ijk}^2 \big )^2+\dfrac 9{4S(S-1)}\sum_i\lambda_i^2\big (\sum_j\lambda_j^2h_{jji}\big)^2\\
&\ge \frac 12 Sf-2A
+\frac 4{S(S-1)}\sum_i\lambda_i^2\big(\sum_j\lambda_j^2h_{jji}
\big )^2+\frac {2f_3}S \sum_{i,j,k}\lambda_ih_{ijk}^2  +\frac 1{S^2}\big(\sum_{i,j,k}\lambda_ih_{ijk}^2 \big)^2,
\end{aligned}
\end{equation*}
where in  the last inequality, we used $(x+y)^2\geq 4xy$ with
$$
\begin{aligned}
&x=-2A+\dfrac12Sf+\dfrac{2f_3}S\sum_{i,j,k}\lambda_ih_{ijk}^2
+\dfrac 7{4S(S-1)}\sum_i\lambda_i^2\big (\sum_j\lambda_j^2h_{jji}\bigl)^2,\\
&y=\dfrac12Sf-\dfrac 1{4S(S-1)}\sum_i\lambda_i^2\big (\sum_j\lambda_j^2h_{jji}\big)^2.
\end{aligned}
$$
From (\ref{eq:3.2}) and  (\ref{eq:3.3}) and the above inequality,  we have
\begin{equation*}
\begin{aligned}
&S(S-1)(S-2) +3(A-2B)\\
&\geq  2Sf-2A+\frac 4{S(S-1)}\sum_i\lambda_i^2\bigl (\sum_j\lambda_j^2h_{jji}\bigl)^2
+\frac {2f_3}S \sum_{i,j,k}\lambda_ih_{ijk}^2  +\frac 1{S^2}\bigl(\sum_{i,j,k}\lambda_ih_{ijk}^2 \bigl)^2.\\
\end{aligned}
\end{equation*}
\end{proof}

\vskip2mm
\noindent
{\it Proof of theorem 3.1}. 
Since $S$ is constant, we know that Ricci curvature is bounded from the Gauss equation
and the principal curvatures $\lambda_i$, $h_{ij}$, $h_{ijk}$ and  $h_{ijkl}$ for any $i, j, k, l$ 
are bounded according to the formula (3.2). 
Thus,  it is obvious  that the function $F=\dfrac14Sf_4-\dfrac 16 c f_3^2$  for a given constant $c$ is  bounded
and in view of (\ref{eq:2.5}), we have
\begin{equation}\label{eq:3.5}
\begin{aligned}
\mathcal LF&= S(1-S)f_4+S(2A+B)\\
&-c\bigl((1-S)f_3^2+2f_3\sum_{i,j,k}\lambda_ih_{ijk}^2 +3\sum_{j}(\sum_i\lambda_i^2h_{iij})^2\bigl).
\end{aligned}
\end{equation}
By  applying  the generalized maximum principle for $\mathcal L$-operator to the function 
$F$, there exists a sequence $\{p_m\} \subset M$ such that 
 $$
 \lim_{m\to\infty}F(p_m)=\sup F,
\ \ \lim_{m\to\infty}|\nabla F|(p_m)=0,
$$
$$
  \lim\sup_{m\to\infty} \mathcal LF(p_m)\leq 0.
$$
Since $\lambda_i$,  $h_{ijk}$ and  $h_{ijkl}$  are bounded, we can assume that 
 $\{\lambda_i(p_m)\}$, $\{h_{ijk}(p_m)\}$ and  $\{h_{ijkl}(p_m)\}$  converge 
 if necessary  by taking a subsequence of $\{p_m\}$. By  abusing  notation, we still use 
 $\lambda_i$,  $h_{ijk}$ and  $h_{ijkl}$ to denote
 limits of $\{\lambda_i(p_m)\}$, $\{h_{ijk}(p_m)\}$ and  $\{h_{ijkl}(p_m)\}$, respectively and all computations are processed under limits.
 Hence, we have
\begin{equation}\label{eq:3.6}
 -(S-1)\bigl(Sf_4-cf_3^2\bigl)\leq 2cf_3\sum_{i,j,k}\lambda_ih_{ijk}^2- S(2A+B)+3c\sum_{j}(\sum_i\lambda_i^2h_{iij})^2
\end{equation}
from (\ref{eq:3.5}) and $\lim\sup_{m\to\infty} \mathcal LF(p_m)\leq 0$.
If $S\leq 1$, since $S$ is constant, in view of (\ref{eq:2.7}), we have  $h_{ijk}\equiv 0$.  Hence,  the second fundamental form of 
self-shrinker  $X: M\rightarrow \mathbb{R}^{n+1}$ is parallel. According to the results due to Lawson \cite{l},  we know that 
the self-shrinker  $X: M\rightarrow \mathbb{R}^{n+1}$
is isometric to one of  $S^{n}(\sqrt{n})$,  $\mathbb{R}^{n}$, 
$S^k (\sqrt k)\times \mathbb{R}^{n-k}$, $1\leq k\leq n-1$. Next we will prove that  $1<S\leq \dfrac75$ does not occur. Namely, we will prove $S>\frac75$ if $S>1$. This is a contradiction since $S$ is constant.\\
Taking $c=2$ in (\ref{eq:3.6}), we obtain
$$
 0\leq (S-1)\bigl(\dfrac12Sf_4-f_3^2\bigl)+2\sum_{i,j,k}\lambda_ih_{ijk}^2 f_3-\dfrac S2(2A+B)+3\sum_{j}(\sum_i\lambda_i^2h_{iij})^2.
 $$
 Therefore,  we can make use of  the same  arguments (word by word)  as in the part I of proof of the theorem 1.1 of 
 Cheng and Wei \cite{cw} (page 4903)  in place of their lemma 3.1 by the above inequality
 to prove  $S>\dfrac65$ if $S>1$. \\
From now, we prove $S>\dfrac75$ if $S>\dfrac65$. 
We define a function $\phi$ by 
\begin{equation}
\phi=-\dfrac{13}{3}tSf-\frac {2f_3}S \sum_{i,j,k}\lambda_ih_{ijk}^2  -\frac 1{S^2}\bigl(\sum_{i,j,k}\lambda_ih_{ijk}^2 \bigl)^2
-\dfrac{4}{tS^2}\sum_{j}\lambda_j^2(\sum_i\lambda_i^2h_{iij})^2.\\
\end{equation}
Letting $c=1+c_1$ in (\ref{eq:3.6}) with $c_1=\dfrac{22+3t-\sqrt{(22+3t)^2-400}}{26}$, we infer
\begin{equation}\label{eq:3.8}
(\dfrac{10}{13}+c_1)^2=\dfrac{3(14+t)}{13}c_1.
\end{equation}
From (\ref{eq:3.6}) and the definition of $\phi$, we obtain
\begin{equation*}
\begin{aligned}
\phi&\leq -\dfrac{13}{3}(2A+B)-\dfrac{13c_1t}{3}f_3^2+(\dfrac{10}3+\dfrac{13}3c_1)\frac {2f_3}S \sum_{i,j,k}\lambda_ih_{ijk}^2  \\
&+\dfrac{13(1+c_1)}S\sum_{j}(\sum_i\lambda_i^2h_{iij})^2-\frac 1{S^2}\bigl(\sum_{i,j,k}\lambda_ih_{ijk}^2 \bigl)^2
-\dfrac{4}{tS^2}\sum_{j}\lambda_j^2(\sum_i\lambda_i^2h_{iij})^2\\
&\leq -\dfrac{13}{3}(2A+B)+\bigl\{\dfrac{13}{3tc_1}(\dfrac{10}{13}+c_1)^2-1\bigl\} \dfrac{1}{S^2}\bigl(\sum_{i,j,k}\lambda_ih_{ijk}^2\bigl)^2  \\
&+\dfrac{13(1+c_1)}S\sum_{j}(\sum_i\lambda_i^2h_{iij})^2
-\dfrac{4}{tS^2}\sum_{j}\lambda_j^2(\sum_i\lambda_i^2h_{iij})^2,\\
\end{aligned}
\end{equation*}
where we used $2xy\leq  x^2+y^2$ in the last inequality
with
$$
\begin{aligned}
&x=\sqrt{\dfrac{3}{13tc_1}}(\dfrac{10}{3}+\frac{13c_1}3)\dfrac1S\sum_{i,j,k}\lambda_ih_{ijk}^2, \ \
y=\sqrt{\dfrac{13tc_1}{3}}f_3.
\end{aligned}
$$
 Applying the inequality  in  the lemma 3.1 for $\bigl(\sum_{i,j,k}\lambda_ih_{ijk}^2 \bigl)^2$ 
to the above inequality about $\phi$, we get 
 \begin{equation*}
\begin{aligned}
\phi&\leq -\dfrac{13}{3}(2A+B)+\dfrac{13(1+c_1)}S\sum_{j}(\sum_i\lambda_i^2h_{iij})^2
-\dfrac{4}{tS^2}\sum_{j}\lambda_j^2(\sum_i\lambda_i^2h_{iij})^2\\
&+\bigl\{\dfrac{13}{3c_1}(\dfrac{10}{13}+c_1)^2-t\bigl\}  \biggl[\frac 13(A+2B)
-\frac {4}3\sum_j\frac 1{S+2\lambda_j^2}\big( \sum_i
\lambda_i^2h_{iij}\bigl)^2\biggl] \\
&=-4A+5B+\sum_{j}\bigl(\dfrac{13(1+c_1)}S
-\dfrac{4\lambda_j^2}{tS^2}-\dfrac {56}{3(S+2\lambda_j^2)}\bigl)\big( \sum_i
\lambda_i^2h_{iij}\bigl)^2,\\
\end{aligned}
\end{equation*}
where in the last equality, we used (\ref{eq:3.8}). 
Because of  
$$
-\dfrac{4\lambda_j^2}{tS^2}-\dfrac {56}{3(S+2\lambda_j^2)}=\dfrac2{tS}-\dfrac{2S+4\lambda_j^2}{tS^2}-\dfrac {56}{3(S+2\lambda_j^2)},
$$
by making use of $x^2+y^2\geq 2xy$ for the last two terms in the above equality, we have
\begin{equation}\label{eq:3.9}
\begin{aligned}
\phi&\leq -4A+5B+\dfrac{\eta(t)}S\sum_{j}\big( \sum_i\lambda_i^2h_{iij}\bigl)^2.\\
\end{aligned}
\end{equation}
 where $\eta(t)$ is defined by
 $$
 \begin{aligned}
 \eta(t)&= 13(1+c_1)+\dfrac{2}{t}-8\sqrt{\dfrac{7}{3t}}\\
&=\bigl(24+\dfrac{3t-\sqrt{(22+3t)^2-400}}{2}+\dfrac{2}{t}
-\dfrac{8\sqrt {21}}{3}\dfrac1{\sqrt{t}}\bigl).
\end{aligned}
 $$
 Since $\dfrac{d\eta(t)}{dt}>0$ for  $\dfrac16\leq t \leq \dfrac 27$,  $\eta(t)$ 
 is an increasing function when $\dfrac16\leq t \leq \dfrac 27$. Thus,
 we have $1.1628<\eta(1/6)\leq \eta(t)\leq \eta(2/7)\simeq 3.033833811$.
According to the inequality for $\big( \sum_i\lambda_i^2h_{iij}\bigl)^2$ in  the lemma 3.1 and (\ref{eq:3.9}), we obtain
\begin{equation}\label{eq:3.10}
\begin{aligned}
\phi&\leq -4A+5B+t\eta(t) \frac{1+2\alpha}3Sf.\\
\end{aligned}
\end{equation}
From the definition of $\phi$ and (\ref{eq:3.10}), we obtain
\begin{equation*}
\begin{aligned}
&-(\dfrac{13+\eta(t)}{3}+ \dfrac{2\eta(t)\alpha}3)tSf-\frac {2f_3}S \sum_{i,j,k}\lambda_ih_{ijk}^2  
-\frac 1{S^2}\bigl(\sum_{i,j,k}\lambda_ih_{ijk}^2 \bigl)^2
\\&-\dfrac{4}{tS^2}\sum_{j}\lambda_j^2(\sum_i\lambda_i^2h_{iij})^2\leq -4A+5B.\\
\end{aligned}
\end{equation*}
Letting $a(t)=\dfrac{13+\eta(t)}{3}>\dfrac{14}3$ and $b(t)=\dfrac{2\eta(t)}3>\dfrac23$ for $\dfrac16\leq t \leq \dfrac 27$, we know 
that $a(t)$ and $b(t)$ are the increasing functions  for $\dfrac16\leq t \leq \dfrac 27$. Hence, we have
\begin{equation}\label{eq:3.11}
\begin{aligned}
&-(a+ b\alpha)tSf-\frac {2f_3}S \sum_{i,j,k}\lambda_ih_{ijk}^2  \\
&-\frac 1{S^2}\bigl(\sum_{i,j,k}\lambda_ih_{ijk}^2 \bigl)^2
-\dfrac{4}{tS^2}\sum_{j}\lambda_j^2(\sum_i\lambda_i^2h_{iij})^2\\
&\leq -(a(t)+ b(t)\alpha)tSf-\frac {2f_3}S \sum_{i,j,k}\lambda_ih_{ijk}^2  \\
&-\frac 1{S^2}\bigl(\sum_{i,j,k}\lambda_ih_{ijk}^2 \bigl)^2
-\dfrac{4}{tS^2}\sum_{j}\lambda_j^2(\sum_i\lambda_i^2h_{iij})^2\\
&\leq -4A+5B,\\
\end{aligned}
\end{equation}
where $a=a(\frac27)$ and $b=b(\frac27)$.
In order to prove our assertion from (\ref{eq:3.11}), we divide it into two steps.
\vskip1mm
\noindent
Step (1). If 
$$
-\frac {2f_3}S \sum_{i,j,k}\lambda_ih_{ijk}^2  
-\frac 1{S^2}\bigl(\sum_{i,j,k}\lambda_ih_{ijk}^2 \bigl)^2
-\dfrac{4}{tS^2}\sum_{j}\lambda_j^2(\sum_i\lambda_i^2h_{iij})^2> 0,
$$
from (\ref{eq:3.11}), we obtain
\begin{equation}\label{eq:3.12}
\begin{aligned}
&-0.92atSf\leq \dfrac{0.92a}{a+ b\alpha}(-4A+5B),\\
\end{aligned}
\end{equation}
because of  $1.1628<\eta(1/6)$.
From (\ref{eq:3.2}), (\ref{eq:3.3})  and the above inequality, we obtain
\begin{equation*}
\begin{aligned}
&(\dfrac32-0.92at)Sf\leq \dfrac{0.92a}{a+ b\alpha}(-4A+5B)+tS^2(S-2)+3(A-2B)\\
&=-\dfrac{0.92a}{a+ b\alpha}\big\{\dfrac{13}{3}(A-B)-\dfrac13(A+2B)\bigl\}\\
&+4(A-B)-(A+2B)+tS^2(S-2),\\
\end{aligned}
\end{equation*}
that is,
\begin{equation*}
\begin{aligned}
&(\dfrac32-0.92at)Sf
\leq (4-\dfrac{0.92a}{a+ b\alpha}\dfrac{13}{3})(A-B)+tS^2(S-2)\\
\end{aligned}
\end{equation*}
according to  $A+2B\geq 0$.
From $\eta(2/7)\simeq3.033833811$, we have 
$$
a=\dfrac{13+\eta(2/7)}{3}\simeq5.34461127, \ \ b=\dfrac{2\eta(2/7)}3\simeq2.02255587, \ \  w=\dfrac{b}{a}\simeq0.378429
$$
and 
$$
\dfrac32-0.92at\geq \dfrac32-0.92a\dfrac27>0, \ \ 4-\dfrac{0.92a}{a+ b\alpha}\dfrac{13}{3}\geq 4-\dfrac{0.92\times13}{3}>0.
$$
These inequalities in  lemma 3.1 for $Sf$ and $A-B$ and the above inequality yield
\begin{equation}\label{eq:3.13}
\begin{aligned}
&(\dfrac3{2t}-0.92a)\dfrac{(\lambda_1-\lambda_2)^2}{\lambda_1^2+\lambda_2^2}(\lambda_1\lambda_2)^2\\
&\leq (4-\dfrac{0.92a}{a+ b\alpha}\dfrac{13}{3})\dfrac {1-\alpha}3(\lambda_1-\lambda_2)^2S+S(S-2).\\
\end{aligned}
\end{equation}
We infer
 \begin{equation*}
\begin{aligned}
&0\leq \bigl(12-\dfrac{0.92\times 13}{1+w\alpha}\bigl)(1-\alpha)\\
&=\bigl(12-\dfrac{0.92\times13}{1+ w\alpha}\bigl)\bigl(1+w-(1+ w\alpha)\bigl)\dfrac{1}{w}\\
&=\dfrac1w\bigl(12(1+w)+0.92\times13-12(1+w\alpha)-\dfrac{0.92\times13(1+w)}{1+ w\alpha}\bigl)\\
&\leq \dfrac1w\bigl(12(1+w)+0.92\times13-4\sqrt{0.92\times39(1+w)}\bigl)\\
&=\dfrac1w(2\sqrt{3(1+w)}-\sqrt{0.92\times13})^2,
\end{aligned}
\end{equation*}
where we used $x^2+y^2\geq 2xy$ in the last inequality.
Hence, we get from (\ref{eq:3.13})
\begin{equation}\label{eq:3.14}
\begin{aligned}
&(\dfrac3{2t}-0.92\times a)\dfrac{(\lambda_1-\lambda_2)^2}{\lambda_1^2+\lambda_2^2}(\lambda_1\lambda_2)^2\\
&\leq\dfrac1{9w}(2\sqrt{3(1+w})-\sqrt{0.92\times13})^2(\lambda_1-\lambda_2)^2S+S(S-2).
\end{aligned}
\end{equation}
From (\ref{eq:3.14}), we obtain
\begin{equation}\label{eq:3.15}
\begin{aligned}
&0.33295\dfrac{(\lambda_1-\lambda_2)^2}{\lambda_1^2+\lambda_2^2}(\lambda_1\lambda_2)^2
\leq0.10881(\lambda_1-\lambda_2)^2S+S(S-2).
\end{aligned}
\end{equation}
From (\ref{eq:3.15}) and $(\lambda_1-\lambda_2)^2\leq 2S$, we have
\begin{equation}\label{eq:3.16}
\begin{aligned}
&0
\leq 0.21762S+(S-2).
\end{aligned}
\end{equation}
We infer  
$$
S> 1.64>\dfrac75.
$$
\vskip1mm
\noindent
Step (2). If 
$$
-\frac {2f_3}S \sum_{i,j,k}\lambda_ih_{ijk}^2  
-\frac 1{S^2}\bigl(\sum_{i,j,k}\lambda_ih_{ijk}^2 \bigl)^2
-\dfrac{4}{tS^2}\sum_{j}\lambda_j^2(\sum_i\lambda_i^2h_{iij})^2\leq 0,
$$
because of  $1.1628<\eta(1/6)$, we infer from (\ref{eq:3.11})
\begin{equation}\label{eq:3.18}
\begin{aligned}
&-atSf-\frac {2f_3}S \sum_{i,j,k}\lambda_ih_{ijk}^2  -\frac 1{S^2}\bigl(\sum_{i,j,k}\lambda_ih_{ijk}^2 \bigl)^2-\dfrac{4}{tS^2}\sum_{j}\lambda_j^2(\sum_i\lambda_i^2h_{iij})^2\\
&\leq \dfrac{a}{a+ b\alpha}(-4A+5B).\\
\end{aligned}
\end{equation}
Adding  (\ref{eq:3.4}) in the proposition 3.1 and  (\ref{eq:3.18}), we conclude
 \begin{equation*}
\begin{aligned}
&(2-at)Sf\\
&\leq  \dfrac{a}{a+ b\alpha}(-4A+5B)+S(S-1)(S-2) +5A-6B\\
&=\dfrac13\bigl(16-\dfrac{13a}{a+ b\alpha}\bigl)(A-B)+tS^2(S-2) 
-\dfrac13(1-\dfrac{a}{a+ b\alpha})(A+2B)\\
&\leq\dfrac13\bigl(16-\dfrac{13a}{a+ b\alpha}\bigl)(A-B)
+tS^2(S-2).
\end{aligned}
\end{equation*}
In the second equality and the last inequality, we used 
$$
-4A+5B=-\dfrac{13}3(A-B)+\dfrac13(A+2B), \quad 
5A-6B=\dfrac{16}{3}(A-B)-\dfrac{1}3(A+2B)
$$
and  $A+2B\geq 0$, respectively.
Thus, we have 
 \begin{equation}\label{eq:3.19}
\begin{aligned}
(2-at)Sf
&\leq\dfrac13\bigl(16-\dfrac{13a}{a+ b\alpha}\bigl)(A-B)
+tS^2(S-2).
\end{aligned}
\end{equation}
From the inequalitiy in  lemma 3.1 for  $A-B$, we obtain
 \begin{equation}\label{eq:3.20}
\begin{aligned}
&(2-at)Sf\\
&\leq\dfrac13\bigl(16-\dfrac{13a}{a+ b\alpha}\bigl)\dfrac {1-\alpha}3(\lambda_1-\lambda_2)^2tS^2
+tS^2(S-2).
\end{aligned}
\end{equation}
From $w=\dfrac{b}{a}$, we have, by using $x^2+y^2\geq 2xy$, 
 \begin{equation}\label{eq:3.21}
\begin{aligned}
&\bigl(16-\dfrac{13}{1+w\alpha}\bigl)(1-\alpha)\\
&=\dfrac1w\bigl(16(1+w)+13-16(1+w\alpha)-\dfrac{13(1+w)}{1+ w\alpha}\bigl)\\
&\leq \dfrac1w\bigl(16(1+w)+13-8\sqrt{13(1+w)}\bigl)=\dfrac1w(4\sqrt{1+w}-\sqrt{13})^2.
\end{aligned}
\end{equation}
In view of 
 $$
 f=f_4-\frac{f_3^2}{S}\geq \dfrac{(\lambda_1-\lambda_2)^2}{\lambda_1^2+\lambda_2^2}(\lambda_1\lambda_2)^2,
$$
in the lemma 3.1,  (\ref{eq:3.20}) and (\ref{eq:3.21}),  we obtain
\begin{equation}\label{eq:3.22}
\begin{aligned}
&(\dfrac2t-a)\dfrac{(\lambda_1-\lambda_2)^2}{\lambda_1^2+\lambda_2^2}(\lambda_1\lambda_2)^2\\
&\leq\dfrac1{9w}(4\sqrt{1+w}-\sqrt{13})^2(\lambda_1-\lambda_2)^2S+S(S-2).
\end{aligned}
\end{equation}
From $\eta(2/7)\simeq3.033833811$, we have 
$$
a=\dfrac{13+\eta(2/7)}{3}\simeq5.34461127, \ \ b=\dfrac{2\eta(2/7)}3\simeq2.02255587, \ \  w\simeq0.378429.
$$
If $\lambda_1\lambda_2\leq 0$, we get from (\ref{eq:3.22})
\begin{equation*}
\begin{aligned}
&(\dfrac2t-a)(\lambda_1\lambda_2)^2
+\dfrac2{9w}(4\sqrt{1+w}-\sqrt{13})^2S\lambda_1\lambda_2\leq \dfrac1{9w}(4\sqrt{1+w}-\sqrt{13})^2S^2+S(S-2).
\end{aligned}
\end{equation*}
Thus, we have 
\begin{equation*}
\begin{aligned}
&-\dfrac{\bigl\{\dfrac1{9w}(4\sqrt{1+w}-\sqrt{13})^2\bigl\}^2}{(\dfrac2t-a)}S
\leq \dfrac1{9w}(4\sqrt{1+w}-\sqrt{13})^2S+(S-2).
\end{aligned}
\end{equation*}
We  used $(x+y)^2\geq 0$ in the above inequality.
Therefore, by using the value of $w$, we conclude
\begin{equation*}
\begin{aligned}
&-0.07377S
\leq 0.3492941S+(S-2),
\end{aligned}
\end{equation*}
that is, $S\geq 1.4054>1+\dfrac25$.\\
If $\lambda_1\lambda_2>0 $, we get from (\ref{eq:3.22})
\begin{equation*}
\begin{aligned}
&0\leq \dfrac1{9w}(4\sqrt{1+w}-\sqrt{13})^2S^2+S(S-2).
\end{aligned}
\end{equation*}
Hence, we have
\begin{equation*}
\begin{aligned}
&0\leq 0.3492941S+(S-2).\\
\end{aligned}
\end{equation*}
Hence $S\geq 1.4822>1+\dfrac25$.
 We complete the proof of the theorem 3.1.
 \begin{flushright}
 $\square$ 
\end{flushright}

\vskip2mm
\noindent
{\it Proof of theorem 1.2}. Since $S$ is constant,  according to the above theorem 3.1,  if  $S\leq \dfrac75$ we have $S\leq 1$ and 
the self-shrinker  $X: M\rightarrow \mathbb{R}^{n+1}$
is isometric to one of  $S^{n}(\sqrt{n})$,  $\mathbb{R}^{n}$, 
$S^k (\sqrt k)\times \mathbb{R}^{n-k}$, $1\leq k\leq n-1$.
\newline
Next, we will prove  that $S$ satisfies $S\leq \dfrac75$. If  $S>\dfrac75$, 
since $S$ is constant, for this fixed self-shrinker, $S>\dfrac75+\epsilon$ for a very small
$\epsilon>0$ depending on this self-shrinker.
According to the Gauss equation (\ref{eq:2.2}), we infer that the Ricci curvature is bounded from below
and $H^2=S+R>\epsilon$ since $R\geq -\frac 75$. 
Thus, we  can assume $H>\sqrt{\epsilon}$ on $M$. From (\ref{eq:2.9}), we have
\begin{equation}\label{eq:3.23}
\begin{aligned}
\dfrac12\mathcal{L}\dfrac {S} {H^2}
&=\dfrac{1}{H^4}\sum_{i, j, k}|h_{ij}\nabla_kH-h_{ijk}H|^2-\dfrac1H\langle\nabla H,\nabla \dfrac {S}{H^2}\rangle.
\end{aligned}
\end{equation}
Applying the generalized maximum principle for $\mathcal L$-operator to the function $\dfrac{S}{H^2}$, 
there exists a sequence $\{p_m\} \subset M^n$ such that 
\begin{enumerate}
\item $\lim_{m\to\infty}\dfrac{S}{H^2}(p_m)=\sup \dfrac{S}{H^2}$,
\item $\lim_{m\to\infty}|\nabla \dfrac{S}{H^2}|(p_m)=0$,
\item $\lim\sup_{m\to\infty} \mathcal L\dfrac{S}{H^2}(p_m)\leq 0$.
\end{enumerate}
Since $S$ is constant, we obtain  
$$
|\nabla \dfrac{S}{H^2}|^2=\dfrac{4S^2}{H^6}|\nabla H|^2.
$$
We derive 
\begin{equation}\label{eq:3.24}
\lim_{m\to\infty}|\nabla H(p_m)=0.
\end{equation}
Hence, we get, from (\ref{eq:3.23}) and (\ref{eq:3.24}),
$\lim_{m\to\infty}h_{ijk}(p_m)=0$ for any $i, \ j, \ k$.
In view to (\ref{eq:2.7}) and $S=$constant, we have
$\lim_{m\to\infty}{S}(p_m)=1$ or  $\lim_{m\to\infty}{S}(p_m)=0$.
This is impossible because of $S>\dfrac{7}{5}$.
We complete our proof of the theorem 1.2.
\begin{flushright}
 $\square$ 
\end{flushright}

\section{Bounds of scalar curvature}

\noindent
In this section, we will discuss  upper bounds of the scalar curvature and  prove the theorem 1.3.
\vskip2mm
\noindent
{\it Proof of theorem 1.3}.
Since the  scalar curvature  is a positive constant, we know  
\begin{equation*}
0<R=H^2-S\leq H^2-\frac{H^2}n=\frac{n-1}nH^2,
\end{equation*}
that is,  
$$
H^2\geq \frac{n}{n-1}R>0.
$$
We can assume $H>0$.
From the Gauss equation (\ref{eq:2.2}), we obtain 
\begin{equation}
1>1-\dfrac{R}{H^2}=\dfrac{S}{H^2}\geq \dfrac{1}{n}.
\end{equation}
Since the scalar curvature is constant, we have 
$$
\nabla_iH^2=\nabla_iS
$$
for any $i$, 
that is, 
$$
H\nabla_i H=\sum_{j}\lambda_jh_{jji},
$$
where $\lambda_j$'s  denote the principal curvatures. 
Therefore, we obtain from the Schwarz inequality
\begin{equation}
H^2|\nabla H|^2\leq S\sum_{i,j}h_{iij}^2.
\end{equation}
From $\dfrac{S}{H^2}<1$, we have 
$$
|\nabla H|^2\leq \sum_{i,j}h_{iij}^2.
$$
In view of 
\begin{equation}\label{eq:4.3}
0=\dfrac12\mathcal L R=\dfrac{1}{2}\mathcal{L}(H^2-S)
=|\nabla H|^2-\sum_{i,j,k}h_{ijk}^{2}+(1-S)R,
\end{equation}
we have $S\leq 1$. Hence $H^2\leq nS\leq n$. Thus, we conclude 
$$
\dfrac{n}{n-1}R\leq H^2\leq n, 
$$
which yields 
$$
 0<R\leq n-1.
$$
\begin{flushright}
 $\square$ 
\end{flushright}

\vskip5mm
\section{Proofs of theorems 1.4 and 1.5}
\vskip2mm
\noindent
In this section, we will prove the theorems 1.4 and 1.5.
\vskip2mm
\noindent
{\it Proof of theorem 1.4}.  If the scalar curvature $R=0$, the result has been proven by Luo, Sun and Yin \cite{lsy}. Hence, we only
consider  the case that  the  scalar curvature is a positive constant.  According to the theorem 1.3 and its proof,  we know 
 $$
 0<R\leq n-1,  \quad H^2\geq \frac{n}{n-1}R>0.
 $$
We can assume $H>0$.
Hence, we obtain
\begin{equation}\label{eq:5.1}
\nabla_i\dfrac{S}{H^2}=\dfrac{R}{H^4}\nabla_iH^2, \quad |\nabla\dfrac{S}{H^2}|^2=\dfrac{4R^2}{H^6}|\nabla H|^2.
\end{equation}
From  the formula (\ref{eq:2.9}), we have
\begin{equation}\label{eq:5.2} 
\begin{aligned}
\dfrac12\mathcal{L}\dfrac {S} {H^2}
&=\dfrac{1}{H^4}\sum_{i, j, k}|h_{ij}\nabla_kH-h_{ijk}H|^2-\dfrac1H\langle\nabla H,\nabla \dfrac {S}{H^2}\rangle.
\end{aligned}
\end{equation}
In view of the  theorem 1.3, we have  $S\leq 1$. Hence,  from the Gauss equation (\ref{eq:2.2}), 
we know that the Ricci curvature is bounded from below.  Applying
the generalized maximum principle due to Cheng and Peng \cite{cp} to the function $\dfrac{S}{H^2}$, 
there exists a sequence $\{p_k\} \subset M^n$ such that 
\begin{equation*}
\lim_{k\to\infty}\dfrac{S}{H^2}(p_k)=\sup \dfrac{S}{H^2},  \  \ \lim_{k\to\infty}|\nabla \dfrac{S}{H^2}|(p_k)=0, \  \
 \lim\sup_{k\to\infty} \mathcal L\dfrac{S}{H^2}(p_k)\leq 0.
\end{equation*}
In view of (\ref{eq:5.1}), we obtain
$$
\lim_{k\to\infty}\dfrac1H\langle\nabla H,\nabla \dfrac {S}{H^2}\rangle(p_k)=\lim_{k\to\infty}\dfrac{2R}{H^4}|\nabla H|^2(p_k)=0.
$$
From (\ref{eq:5.2}), $ \lim\sup_{k\to\infty} \mathcal L\dfrac{S}{H^2}(p_k)\leq 0$  and the above equality, we derive
$$
\lim_{m\to\infty}\dfrac{1}{H^4}\sum_{i, j, k}|h_{ij}\nabla_kH-h_{ijk}H|^2(p_m)= 0.
$$
Hence, we know  for any $i, j, k$,
$$
\lim_{m\to\infty}|\nabla H|(p_m)=0, \quad \lim_{m\to\infty}h_{ijk}(p_m)=0.
$$
Since the scalar curvature is constant, from the Gauss equation (\ref{eq:2.2}) and (\ref{eq:2.7}), we have
\begin{equation}\label{eq:}
0=\dfrac12\mathcal L R=\dfrac{1}{2}\mathcal{L}(H^2-S)
=|\nabla H|^2-\sum_{i,j,k}h_{ijk}^{2}+(1-S)R.
\end{equation}
Thus, we get 
\begin{equation}
\lim_{k\to\infty}S(p_k)=1=\sup S
\end{equation}
from $S\leq 1$.
Because of 
\begin{equation}
\sup \dfrac{S}{H^2}=\lim_{k\to\infty}\dfrac{S}{H^2}(p_k)=\dfrac{1}{R+1},
\end{equation}
we have 
$$
\dfrac{S}{H^2}\leq \dfrac{1}{R+1} \ {\rm and}\ S\leq \dfrac{H^2}{R+1}.
$$
If $S(p)=1$ for some $p\in M$, letting $u=1-S\geq 0$, from (\ref{eq:2.7}), we have
\begin{equation}
\dfrac{1}{2}\mathcal{L}u
=-\sum_{i,j,k}h_{ijk}^{2}-uS\leq 0.
\end{equation}
According to the strong maximum principle,
we know $S\equiv 1$. 
Hence,  $X : M^{n}\to \mathbb{R}^{n+1}$ is isometric to  $S^{n}(\sqrt{n})$
or $S^k (\sqrt k)\times \mathbb{R}^{n-k}$, $1\leq k\leq n-1$.\\
Otherwise, we have $S<1$ and $\sup S=1$. 
From $H^2\geq \frac{n}{n-1}R>0$  and $H^2\leq nS<n$, we obtain
$0<R<n-1$.
The Gauss  equation $H^2-S=R$ implies 
$$
1>S\geq \dfrac{R}{n-1}\ \ {\rm and} \ \ \sup S+R =\sup H^2\leq n
$$
since $R$ is constant.  For any $j$,  from 
\begin{equation*}
\begin{aligned}
H^2-R&=S=\sum_{i=1}^n\lambda_i^2\geq \dfrac{1}{n-1}(\lambda_j-H)^2+\lambda_j^2,
\end{aligned}
\end{equation*}
we obtain
\begin{equation*}
\begin{aligned}
0\geq n\lambda_j^2-2H\lambda_j-(n-2)H^2+(n-1)R.
\end{aligned}
\end{equation*}
Thus, we derive
\begin{equation}\label{eq:5.7}
\begin{aligned}
&\dfrac{1}n\bigl(H-\sqrt{(n-1)^2H^2-n(n-1)R}\bigl)\\
&\leq\lambda_j\leq \dfrac{1}n\bigl(H+\sqrt{(n-1)^2H^2-n(n-1)R}\bigl).
\end{aligned}
\end{equation}
Hence, we have
\begin{equation*}
\begin{aligned}
&-\dfrac{1}n\bigl(\dfrac{n-2}2H+\sqrt{(n-1)^2H^2-n(n-1)R}\bigl)\\
&\leq\lambda_j-\dfrac H2\leq \dfrac{1}n\bigl(-\dfrac{n-2}2H+\sqrt{(n-1)^2H^2-n(n-1)R}\bigl).
\end{aligned}
\end{equation*}
According to the Gauss equation (\ref{eq:2.2}), we get
\begin{equation*}
\begin{aligned}
&R_{jj}=H\lambda_j-\lambda_j^2=\dfrac{H^2}4-(\lambda_j-\dfrac{H}2)^2\\
&\geq \dfrac{H^2}4-\dfrac{1}{n^2}\bigl(\dfrac{n-2}2H+\sqrt{(n-1)^2H^2-n(n-1)R}\bigl)^2\\
&=\dfrac{1}{n^2}\bigl((n-1)H+\sqrt{(n-1)^2H^2-n(n-1)R}\bigl)\bigl((H-\sqrt{(n-1)^2H^2-n(n-1)R}\bigl)\\
&=\dfrac{1}{n^2}\dfrac{(n-1)H+\sqrt{(n-1)^2H^2-n(n-1)R}}{(H+\sqrt{(n-1)^2H^2-n(n-1)R}}\bigl(n(n-1)R-n(n-2)H^2\bigl).\\
\end{aligned}
\end{equation*}
If $R>(n-2)$, we have
$$
n(n-1)R-n(n-2)H^2=nR-n(n-2)S>n(R-(n-2))>0.
$$
We know 
$$
R_{jj}>\dfrac{R-(n-2)}{n}>0.
$$
In view of the Myers theorem,  $X : M^{n}\to \mathbb{R}^{n+1}$  is compact. It is a contradiction.
Therefore, $R\leq n-2$. From the Gauss equation (\ref{eq:2.2}), 
we  conclude 
$$ 
H^2= S+R< n-1.
$$
If $R=n-2$ and  there exists a point $p$ such that some  principal curvature $\lambda_j$ at $p$
is non-positive, in view of (\ref{eq:5.7}),  we have  at $p$, 
$$
H-\sqrt{(n-1)^2H^2-n(n-1)R}\leq 0.
$$
Hence, 
$$
H^2\geq \dfrac{n-1}{n-2}R=n-1.
$$
 It is impossible because of $H^2<n-1$.
Thus,  all of the  principal curvatures are positive on $M$. From the theorem of Stoker \cite{jn,s, v},  we know that
$X : M^{n}\to \mathbb{R}^{n+1}$ is the  boundary of a convex body in $\mathbb{R}^{n+1}$ 
and diffeomorphic to $\mathbf R^n$.
Hence,  $X : M^{n}\to \mathbb{R}^{n+1}$ is proper.  According to the theorem of Colding and Minicozzi \cite{cm}, 
it is also impossible because of $S<1$. We should remark that the embedding condition in the theorem of 
Colding and Minicozzi \cite{cm} is not necessary if the  scalar curvature is positive.
Hence, $R<n-2$ and $H^2<R+1<n-1$.\\
From 
$$
\nabla_i|X|^2=2\langle X,e_i\rangle, 
$$
we have 
\begin{equation}\label{eq:5.8}
|\nabla |X|^2|^2=4\sum_i\langle X,e_i\rangle^2.
\end{equation}
Applying the generalized maximum principle for $\mathcal L$-operator  to $-|X|^2$,  we know
there exists a sequence $\{p_k\} \subset M^n$ such that 
$$
\lim_{k\to\infty}|X|^2(p_k)=\inf |X|^2,
\ \ \lim_{k\to\infty}|\nabla |X|^2|(p_k)=0,
\ \  \lim\inf_{k\to\infty} \mathcal L|X|^2(p_k)\geq 0.
$$
In view of (\ref{eq:2.6}) and (\ref{eq:5.8}), we derive
$$
\lim_{k\to\infty}|\nabla |X|^2|^2(p_k)=4\lim_{k\to\infty}\sum_i\langle X,e_i\rangle^2(p_k)=0
$$
and
$$
\inf |X|^2= \lim_{k\to\infty}|X|^2(p_k)=\lim_{k\to\infty}H^2(p_k)\geq \frac{n}{n-1}R,
$$
that is, 
$$
 \inf |X|^2\geq \frac{n}{n-1}R.
$$
Furthermore, if $\sup |X|^2<\infty$, by applying the generalized maximum principle for $\mathcal L$-operator  to $|X|^2$,
we obtain
 $$
 \lim_{k\to\infty}|X|^2(p_k)=\sup|X|^2,
\ \ \lim_{k\to\infty}|\nabla |X|^2|(p_k)=0,
$$
$$
  \lim\sup_{k\to\infty} \dfrac12\mathcal L|X|^2(p_k)=n-\sup |X|^2\leq 0.
$$
According to (\ref{eq:2.6}) and (\ref{eq:5.8}), we have
$$
\sup |X|^2= \lim_{k\to\infty}|X|^2(p_k)=\lim_{k\to\infty}H^2(p_k)\geq n.
$$
This is a contradiction because of $H^2<n-1$. Hence, we conclude  $\sup |X|^2=\infty$.
We finish our proof of the theorem 1.4. 
\begin{flushright}
 $\square$ 
\end{flushright}
\vskip2mm
\noindent
{\it Proof of theorem 1.5}. If $R=0$, then $H\equiv 0$ and $S\equiv 0$. Hence, $X : M^{n}\to \mathbb{R}^{n+1}$ is totally 
geodesic. Thus, $X : M^{n}\to \mathbb{R}^{n+1}$ is isometric to $\mathbb R^n$.
Next, we consider $R>0$.  According to the theorem 1.4, we have that 
$X : M^{n}\to \mathbb{R}^{n+1}$ is isometric to $S^{n}(\sqrt{n})$ in this case $H^2=\dfrac{n}{n-1}R$ 
or  $S^{n-1}(\sqrt {n-1})\times \mathbb{R}$ in this case  $H^2=\dfrac{n-1}{n-2}R$ or $X : M^{n}\to \mathbb{R}^{n+1}$
satisfies $R< n-2$, $\dfrac{n}{n-1}R\leq H^2$, $S<1$ and $\sup S=1$. From (\ref{eq:5.7}), for any $j$, we have
\begin{equation*}
\begin{aligned}
&\dfrac{1}n\bigl(H-\sqrt{(n-1)^2H^2-n(n-1)R}\bigl)\leq\lambda_j\leq \dfrac{1}n\bigl(H+\sqrt{(n-1)^2H^2-n(n-1)R}\bigl).
\end{aligned}
\end{equation*}
If there exists a point $p\in M$  such that  for some $j$, $\lambda_j\leq 0$ at $p$,  we have from (\ref{eq:5.7})
$$
H-\sqrt{(n-1)^2H^2-n(n-1)R}\leq0.
$$
Hence, we get 
$$
(n-1)R\leq (n-2)H^2.
$$
Since $H^2\leq \dfrac{n-1}{n-2}R$ holds, we know that $H^2$ attains its maximum at $p$,
$$
H^2(p)=\sup H^2=\dfrac{n-1}{n-2}R. 
$$
From $S<1$ on $M$ and $\sup S=1$, we know that $S$ does not  get its maximum on $M$. Therefore,
from the Gauss equation (\ref{eq:2.2}) $H^2=S+R$, we conclude that $H^2$ can not attain its maximum
on $M$. This is a contradiction. Hence, we conclude that all of the  principal curvatures are positive on $M$. 
$X : M^{n}\to \mathbb{R}^{n+1}$ is locally, strictly  convex. 
From the theorem of Stoker \cite{jn,s, v},  we know that
$X : M^{n}\to \mathbb{R}^{n+1}$ is the  boundary of a convex body in $\mathbb{R}^{n+1}$ 
and diffeomorphic to $\mathbf R^n$.
Hence,  $X : M^{n}\to \mathbb{R}^{n+1}$ is proper.  According to the theorem of Colding and Minicozzi \cite{cm}, 
it is also impossible because of $S<1$. Thus, 
$X : M^{n}\to \mathbb{R}^{n+1}$ is isometric to $S^{n}(\sqrt{n})$
or  $S^{n-1}(\sqrt {n-1})\times \mathbb{R}$.

\begin{flushright}
 $\square$ 
\end{flushright}
\vskip2mm
\noindent

 \vskip2mm
\noindent
{\bf Acknowledgement}.
Authors would like to express their sincere gratitude to the  referee for valuable suggestions and comments. \newline  
This work was partly supported by MEXT Promotion of Distinctive Joint Research Center Program JPMXP0723833165 
and Osaka Metropolitan University Strategic Research Promotion Project (Development of International Research Hubs).


\begin{thebibliography}{99}
\bibitem{al}
U. Abresch and J. Langer, The normalized curve shortening flow and
homothetic  solutions, J. Differential Geom., {\bf 23}(1986), 175-196.

\bibitem{b}
S. Brendle,  Embedded self-similar shrinkers of genus 0,   Ann. of Math.,  {\bf 183}(2016), 715-728.

\bibitem{cl}
H.-D. Cao and H. Li,  A gap theorem for self-shrinkers of the mean
curvature flow in arbitrary codimension, Calc. Var. Partial Differential Equations, {\bf 46} (2013), 879-889.


\bibitem{cliw} Q. -M. Cheng, Z. Li and G. Wei, Complete self-shrinkers with constant norm of the second fundamental form,  
{Math. Z.},  300 (2022), 995-1018.


\bibitem{cliw1} Q. -M. Cheng, Z. Li and G. Wei,   A classification of complete $3$-dimensional self-shrinkers in the Euclidean space $\mathbb R^{4}$,
Sci. China Math., 67(2024) , 873-882. 


\bibitem{co}
Q. -M. Cheng and S. Ogata, $2$-dimensional complete  self-shrinkers in $\mathbb R^{3}$,
  Math. Z., {\bf 284}(2016), 537-542.

\bibitem{cp}
Q. -M. Cheng and Y. Peng,  Complete  self-shrinkers of the mean curvature flow,
  Calc. Var. Partial Differential Equations, {\bf 52} (2015), 497-506.

\bibitem{cw}
Q. -M. Cheng and G. Wei,  A gap theorem for self-shrinkers,
Trans. Amer. Math. Soc., {\bf 367} (2015), 4895-4915.


\bibitem{cwy} Q. -M. Cheng,  G. Wei and W. Yano, The second gap on complete self-shrinkers, 
{Proc. Amer. Math. Soc.}  {\bf 151} (2023), 339-348. doi.org/10.1090/proc/16107.	

\bibitem{cz}
X. Cheng and D. Zhou,  Volume estimate about shrinkers, Proc. Amer. Math. Soc., {\bf 141} (2013), 687-696.

\bibitem{cm}
T. H. Colding and W. P.  Minicozzi II,  Generic mean curvature flow I;  Generic singularities,
Ann. of Math.,  {\bf 175} (2012), 755-833.

\bibitem{dx1}
Q. Ding and Y. L. Xin, Volume growth, eigenvalue and compactness for self-shrinkers,  Asian J. Math., {\bf 17} (2013), 443-456.

\bibitem{dx2}
Q. Ding and Y. L. Xin, The rigidity theorems of self shrinkers, Trans. Amer. Math. Soc., {\bf 366} (2014), 5067-5085.

\bibitem{dr}
G. Drugan, An immersed $S^2$ self-shrinker, Trans. Amer. Math. Soc. {\bf 367} (2015), 3139-3159.

\bibitem{g}
Z. Guo, Scalar curvature of self-shrinkers, J. Math. Soc. Japan, {\bf 70} (2018), 1103-1110.


\bibitem{h}
H. Halldorsson, Self-similar solutions to the curve shortening flow, Trans. Amer. Math. Soc., {\bf 364} (2012), 5285-5309.

\bibitem{h2} G. Huisken,
Asymptotic behavior for singularities of the mean curvature flow, J. Differential
Geom., {\bf 31} (1990), 285-299.

\bibitem{h3}
G. Huisken, Local and global behaviour of hypersurfaces moving by mean curvature,  Differential
geometry: partial differential equations on manifolds (Los Angeles, CA, 1990), Proc. Sympos. Pure
Math., {\bf 54}, Part 1, Amer. Math. Soc., Providence, RI, (1993), 175-191.

\bibitem{jn} 
L. B. Jonker and  R. D. Norman, {\it Locally convex hypersurfaces}, Can. J. Math., {\bf 25} (1973), 531-538.

\bibitem {l}  Lawson H. B. Jr., Local rigidity theorems for
minimal hypersurfaces,  Ann. of  Math., \textbf{89} (1969),
187-197.

\bibitem{lw1}
H. Li and Y.  Wei, {\it Lower volume growth estimates for self-shrinkers of mean curvature flow}, 
Proc. Amer. Math. Soc.,  {\bf 142} (2014), 3237-3248.

\bibitem{lw2}
H. Li and Y. Wei, {\it Classification and rigidity of self-shrinkers in the mean curvature flow}, J. Math. Soc. Japan, {\bf 66} (2014), 709-734.

\bibitem{lsy}
Y. Luo, L. Sun and J. Yin, {\it Complete self-similar hypersurfaces to mean curvature flow with non-negative constant scalar curvature}, 
Front. Math., {\bf 18} (2023), 417-430.


\bibitem{s}	 J. Stoker, {\it ber die gestalt der positiv gekmmten  offenen  flchen  im  dreidimensionalen  raume}, 
Compositio Math., 3 (1936), 55-88.


\bibitem{v}
J. Van Heijenoort, On locally convex manifolds, Comm. Pure Appl. Math. {\bf 5} (1952), 223-242.

\end{thebibliography}
\end{document}